\pgfplotsset{compat=newest}
\pgfplotsset{compat=newest}
\pgfplotsset{compat=newest}
\definecolor{color0}{rgb}{0.12156862745098,0.466666666666667,0.705882352941177}
\definecolor{color1}{rgb}{1,0.498039215686275,0.0549019607843137}
\definecolor{color2}{rgb}{0.172549019607843,0.627450980392157,0.172549019607843}
\definecolor{color3}{rgb}{0.83921568627451,0.152941176470588,0.156862745098039}
\definecolor{color4}{rgb}{0.580392156862745,0.403921568627451,0.741176470588235}
\definecolor{color5}{rgb}{0,0,0}
\definecolor{mycolor1}{rgb}{0.00000,0.44700,0.74100}
\definecolor{mycolor2}{rgb}{0.85000,0.32500,0.09800}
\definecolor{mycolor3}{rgb}{0.92900,0.69400,0.12500}
\definecolor{mycolor4}{rgb}{0.46600,0.67400,0.18800}
\definecolor{mycolor5}{rgb}{0.49400,0.18400,0.55600}
\newcommand*\closure[1]{\overline{#1}}
\newcommand{\bx}{{\bm x}}
\newcommand{\R}{\mathbb R}
\newcommand{\N}{\mathbb N}
\newcommand{\norm}[2]{\left|#1\right|_{#2}}
\newcommand{\ddt}{\partial_t}
\newcommand{\linspan}{\mathop{\rm span}\nolimits}
\newcommand{\range}{\mathop{\rm range}\nolimits}
\newcommand{\proj}{\Pi}
\newcommand{\bu}{\mathbf{u}}
\newcommand{\cor}{\calR}
\newcommand{\ubar}[1]{\underaccent{\bar}{#1}}
\newcommand{\calC}{\mathcal{C}}
\newcommand{\calJ}{\mathcal{J}}
\newcommand{\calL}{\mathcal{L}}
\newcommand{\calQ}{\mathcal{Q}}
\newcommand{\calR}{\mathcal{R}}
\newcommand{\calS}{\mathcal{S}}
\newcommand{\calU}{\mathcal{U}}
\newcommand{\calW}{\mathcal{W}}
\newcommand{\calX}{\mathcal{X}}
\newcommand{\PDE}{{\texttt{PDE}}}
\newcommand{\PDEs}{{\texttt{PDEs}}}
\newcommand{\FOM}{{\texttt{FOM}}}
\newcommand{\ROM}{{\texttt{ROM}}}
\newcommand{\ROMs}{{\texttt{ROMs}}}
\newcommand{\MOR}{{\texttt{MOR}}}
\newcommand{\OCP}{{\texttt{OCP}}}
\newcommand{\OCPs}{{\texttt{OCPs}}}
\newcommand{\POD}{{\texttt{POD}}}
\newcommand{\HAPOD}{{\texttt{HAPOD}}}
\newcommand{\BB}{{\texttt{BB}}}
\newcommand{\FE}{{\texttt{FE}}}
\newcommand{\dt}{\,\mathrm dt}
\newcommand{\Da}{\ubar\Delta}
\newcommand{\Db}{\bar\Delta}
\theoremstyle{thmstyleone}%
\newtheorem{theorem}{Theorem}
\newtheorem{lemma}[theorem]{Lemma}%
\newtheorem{corollary}[theorem]{Corollary}%
\theoremstyle{thmstyletwo}%
\newtheorem{remark}[theorem]{Remark}%
\theoremstyle{thmstylethree}%
\begin{document}

\title[Optimality-Based Control Space Reduction]{Optimality-Based Control Space Reduction for High-Dimensional Control Spaces}


\author*[1]{\fnm{Michael} \sur{Kartmann}}\email{michael.kartmann@uni-konstanz.de}

\author[1]{\fnm{Stefan} \sur{Volkwein}}\email{stefan.volkwein@uni-konstanz.de}

\affil*[1]{\orgdiv{Department of Mathematics and Statistics}, \orgname{Universität Konstanz}, \orgaddress{\street{Universitätsstraße 10}, \city{Konstanz}, \postcode{78464}, \state{State}, \country{Germany}}}

\abstract{ {We study Galerkin model reduction for unconstrained linear-quadratic optimal control
problems and show that state-space reduction alone already induces a reduced control
structure via the optimality conditions. As a result, the solely state-reduced and the combined control- and state-reduced problems are equivalent, allowing fast optimization over a reduced control space without
introducing additional approximation error. We derive lower and upper \emph{a posteriori}
error bounds for the optimal control and use them within an online-adaptive algorithm that constructs sufficiently accurate reduced spaces while solving the control problem. Convergence of the algorithm
is proven, and numerical results demonstrate that combined control and state-space
reduction yields significant speed-ups without loss of accuracy compared to state-space
reduction alone.
}}

\keywords{Linear-quadratic optimization, parabolic PDEs, adaptive model-order reduction, proper orthogonal decomposition, control-space reduction, a posteriori error estimates}

\pacs[MSC Classification]{49K20, 49M05, 49M41, 65G20, 93C20}

\maketitle


\section{Introduction}\label{sec:intro}
%
In this contribution, we are interested in a certified control- and state-space Galerkin model order reduction ({\MOR}) for the solution of linear-quadratic optimal control problems (\OCPs) of the form
\begin{subequations}\label{SubEq1}
    \begin{align}\label{eq:OCPformal}
        &\min J(y,u)\coloneqq\int^{T}_{0}\tfrac{1}{2}\,\norm{C(t)y(t)-y_d(t)}{H}^2+\tfrac{\beta}{2} \norm{u(t)}{U}^2\dt
       \end{align}
        s.t. (`subject to') $(y,u)\in  L^2(0,T;V)\times L^2(0,T;U)$ solving
        \begin{align}\begin{cases}
            \ddt y(t)+A(t)y(t) = B u(t) \in V'&\text{in } (0,T),\\
            y(0)=y_0 & \text{in }H.
        \end{cases}\label{eq:FOM_PDEformal}
  \end{align}
\end{subequations}
In this introduction, we focus on formal calculations to present key ideas and refer to Sections~\ref{subsec:intro:notation} and \ref{sec:roms} for a concise setting and the definition of all involved quantities. It is well-known that the optimal control $\bar u$ satisfies the optimality condition (cf., e.g., \cite[Chapter 1]{hinze2008optimization})
\begin{align*}
    \bar u(t) = -\tfrac{1}{\beta}{\calJ_U^{-1}}B'\bar p(t)\quad\text{for }t\in  (0,T),
\end{align*}
for the adjoint variable $\bar p$ according to the cost in \eqref{eq:OCPformal}. {Here, $\calJ_U$ denotes the {Riesz} isomorphism on the control space $U$ as introduced in Section \ref{subsec:intro:notation} below}. 

{We assume that the control space $U$ and the state-space $V$ are high- or even
infinite-dimensional, making an approximation scheme for the {\OCP} necessary.
In this work, we employ Galerkin {\MOR} methods to construct a
reduced state space $V_r \subset V$, and, as is standard in \MOR, the reduced {\OCP} is obtained by projecting the
partial differential equation (\PDE) onto $V_r$. The first main contribution of this paper is the observation that a reduction of the state space alone already induces a reduced control space $U_r \subset U$, on which the optimization can be restricted without introducing any additional error. Indeed, projecting the {\PDE} in \eqref{eq:FOM_PDEformal} onto $V_r$ yields a reduced {\OCP} in which the minimizer $\bar u^r\in L^2(0,T;U)$ is still sought with values in the full control space $U$, but leads to a reduced optimal adjoint state $\bar p^r \in V_r$ satisfying the corresponding reduced optimality condition}
\begin{equation}\label{eq:red_formal}
    \bar u^r(t) = -\tfrac{1}{\beta}{\calJ_U^{-1}}B'\bar p^r(t)\in U_r\quad\text{for }t\in  (0,T).
\end{equation}
{Thus, the reduced optimal control already lies in a reduced space $U_r\subset U$ that is implicitly inherited from the reduced state space via the adjoint $\bar p^r \in V_r$ and the linear operator $-\tfrac{1}{\beta}{\calJ_U^{-1}}B'$. The optimization can therefore be restricted to the reduced control space without altering the optimization result, enabling fast optimization over a low-dimensional space while no additional error is introduced by the control-space reduction itself (see Table \ref{tab:rom_rom_comparison} in Section \ref{sec:numexps}).}

The underlying idea is closely related to Hinze’s concept of \emph{variational discretization} \cite{hinze2005variational} in the context of finite element (\FE) discretizations for control-constrained linear-quadratic {\OCPs}. In variational discretization, the control variable is not discretized explicitly. Instead, only the state and adjoint equations are discretized, while, similar to \eqref{eq:red_formal}, the control is recovered implicitly via the first-order optimality condition as a projection (due to the control constraints) of the discrete adjoint state. This approach avoids an additional control discretization error, yields optimal convergence rates for the control, and does not increase the computational complexity significantly.

Our setting differs in two essential aspects.
First, we consider model order reduction (\MOR) rather than finite element
discretization.
Second, we focus on unconstrained optimal control problems.
As a consequence, the reduced adjoint state induces an \emph{explicitly reduced}
control subspace via \eqref{eq:red_formal}, which allows the optimization to be
carried out exclusively on this reduced space.
In contrast, for control-constrained {\OCPs} the optimality condition involves
a projection onto the admissible control set, and such an explicit reduced
(linear) control space cannot be constructed.

The approach developed in this paper can therefore be interpreted as a
\emph{variational reduction of the control space}.
Beyond the practical advantages discussed above, this viewpoint also yields
theoretical benefits: the equivalence to a solely state-reduced {\OCP} improves
error estimates and simplifies convergence analyses, since no error from
an explicit control-space reduction needs to be taken into account.
A similar effect was observed in~\cite{ali2020reduced} for offline--online reduced
basis approximations of parametrized elliptic {\OCPs}. In an adaptive {\MOR} context, related conditions to \eqref{eq:red_formal} were
exploited in~\cite{kartmann2024adaptive,kartmann2025adaptive} to reduce the dimension of the parameter space, enabling the efficient construction and certification of reduced-order models (\ROMs) for inverse problems with a bilinear coupling between parameter and state.

Instead of an offline-online type model reduction of the control and state space in a parametrized setting (\cite{karcher2018certified,karcher2018reduced,bader2016certified}), we focus on an online adaptive setting for a single solve of the {\OCP} with fixed parameters. Since, for fixed parameters, an offline construction of the {\ROM} can be more costly than solving the full-order model (\FOM) {\OCP} itself, the question of how to choose the approximation spaces arises. Thus, in the absence of \emph{a priori} information, the reduced approximation spaces $V_r$ and $U_r$ must be constructed adaptively based on the current optimizer data; see, e.g., \cite{qian2017certified, keil2021non, kartmann2025adaptive}
for related approaches in \PDE-constrained parameter optimization.
{Hence, the second main contribution of this work is the development of a certified a~posteriori error–based and online adaptive algorithm that simultaneously constructs the approximation space $V_r$ (and thus also $U_r$) while solving the optimal control problem \eqref{SubEq1}. We explicitly note that the proposed adaptive algorithm can be employed to accelerate the offline
(training) phase for parametrized \OCPs, in which a large number of {\FOM}-{\OCPs} with fixed parameters
must be solved.}
To construct the reduced state space, we consider proper orthogonal decomposition (\POD) (see, e.g., \cite{GubV17}, and references therein). In this context, similar adaptive optimization strategies were considered in \cite{kone2017numerical,afanasievadaptive}, where the {\POD} model is updated based on snapshots stemming from the last suboptimal control. Another idea to construct both a {\POD} approximation for the state space and the optimal control is optimality system {\POD} (\cite{KV08,volkwein2011optimality}), where both the {\POD} problem and the optimal control problem are formulated into a combined problem, that is solved until an \emph{a posteriori} error estimator certifies a sufficient quality of the {\POD} optimizer.

\emph{A posteriori} error estimators are a widely used tool to verify the quality of the {\ROM} approximation. In the parametrized {\OCP} setting, we refer, e.g., to \cite{karcher2018certified,bader2016certified,ali2020reduced,karcher2018reduced} and to \cite{troltzsch2009pod,kartmann2024certifiedmodelpredictivecontrol} in adaptive settings. Only in \cite{ali2020reduced}, a lower \emph{a posteriori} bound for the approximation error of the whole optimal triple was given, which can be used to verify the sharpness of the upper bound. Further, the estimation technique used in \cite{troltzsch2009pod} does not use the optimality condition of the reduced model and thus can be applied to any arbitrary control in the control space. This is important since, usually, the solution of {\OCPs} is found by iterative methods that satisfy the optimality condition up to a tolerance, which might conflict with the optimality assumptions needed to make the \emph{a posteriori} estimate rigorous. We present both lower and upper \emph{a posteriori} error bounds that can be used to quantify the distance between the {\FOM} optimal control and an arbitrary element in the control space and thus overcome the aforementioned conflict for {\OCPs} without (control) constraints. {We highlight that, similar to \cite{troltzsch2009pod}, the evaluation of the presented bounds depends on {\FOM} quantities, but not on the solution of the {\FOM}-{\OCP} itself. Hence, the bounds do not achieve online efficiency, which is, however, not needed, in the online-adaptive setting we consider. Indeed, the {\FOM} data used to evaluate the bounds can be reused to update the \ROM, if necessary (see Remark \ref{rem:apost}).}

Let us mention that other techniques for control (or action) space reduction can be found in, e.g., \cite{delavari2025action,chun2024multiscale,8014482} and the references therein.

Next, we summarize our contribution here:
\begin{itemize}
    \item We show that a Galerkin reduction for the state space directly leads to a reduced structure of the optimal control. We prove that the control- and state-reduced {\OCP} is equivalent to the state-reduced {\OCP} (cf. Lemma~\ref{lem:equival}) and convergence towards the solution of the {\FOM-\OCP} as the reduced dimension tends to infinity (cf. Lemma~\ref{lem:ROMconv}).
    \item We provide both lower and upper \emph{a posteriori} bounds for the error in the optimal control with respect to an arbitrary control (cf. Corollary~\ref{cor:error_est_control}).
    Further, we provide an error representation for the error in the optimal value function, in which there is no contribution stemming from the reduction of the control space (cf. Theorem~\ref{theo:optimal_val}).
    \item Based on these error estimates, we introduce an adaptive algorithm (Algorithm~\ref{alg:ROM_OPT}) to approximate the {\FOM} optimal control using {\POD} model reduction for the state and control space. Using interpolation and convergence properties of the {\ROM}, we prove convergence of this algorithm in separable Hilbert spaces (Theorem~\ref{theo:convergence_algo}).
    \item We verify our theoretical findings numerically and demonstrate the error-free speed-up stemming from a combined control- and state-space reduction in contrast to solely state-space reduction. The code for the numerical experiments is provided in \cite{code}.
\end{itemize}
The paper is organized as follows.
After introducing some basic notation in Section~\ref{subsec:intro:notation}, 
we present the {\FOM} and the {\ROMs} in Section~\ref{sec:roms}. 
The results on \emph{a~posteriori} error estimation are formulated in Section~\ref{sec:apost}. 
Section~\ref{sec:adaptivealgo} discusses the adaptive algorithm and its convergence, 
while its numerical realization is presented in Section~\ref{sec:numexps}.

\subsection{Setting and notation}\label{subsec:intro:notation}

Throughout the paper, let $U,V,H$ be real, separable Hilbert spaces with $V\subset H$ compactly and densely embedded. The space $\calL(X,Y)$ is the Hilbert space of all linear bounded operators between the Hilbert spaces $X$ and $Y$, and $\calJ_X:X\to X'$ is the Riesz isomorphism. The identity on a Hilbert space $X$ is denoted by $I$. For ${S}\in \calL(X,Y)$ we denote its induced norm by $\norm{{S}}{}\coloneqq \sup\{|{S}x|_Y:|x|_X=1\}$. For $T>0$ the state trajectory space is denoted by $W(0,T)\coloneqq H^1(0,T;V')\cap L^2(0,T;V)$, where $V'$ denotes the topological dual space of $V$. The norm and the scalar product are denoted by $|x|_X$ and $\langle x,\tilde x\rangle_{X}$, respectively, for $x,\tilde x\in X$. The dual pairing is denoted by $\langle x', x\rangle_{X',X}$ for $x'\in X'$. Given a closed set $K\subset X$, we define the orthogonal projection $\Pi_{K}^X:X\to K$, $v\mapsto \Pi_{K}^Xv$, as the unique solution to
\begin{align*}
    {\langle v-\Pi_{K}^Xv, w\rangle}_X=0\quad\text{for all }w\in K.     
\end{align*}
For an operator $B\in \calL(U,V')$ we define the dual operator $B'\in \calL(V,U')$ by
\begin{align*}
{\langle B'v, u\rangle}_{U',U}={\langle Bu, v\rangle}_{V',V}\quad\text{for all }u\in U\text{ and }v\in V. 
\end{align*}
Reduced quantities are marked by the symbol $r$, e.g., $U_r\subset U$, $V_r\subset V$ denote reduced spaces and $y^r\in W^r(0,T)$ a reduced state in the reduced state trajectory space $W^r(0,T)\coloneqq H^1(0,T;V_r')\cap L^2(0,T;V_r)$. {For an overview of the notation used, we refer to Table \ref{tab:notation}.}
\begin{table}[ht!] 
    \scriptsize
	\centering 
    \caption{{Notation table}}
	\label{tab:notation}
	\begin{tabular}{ll}\toprule
        Symbol & Definition and reference \\
        \midrule
        $\calL(X,Y)$  & space of bounded, linear operators (Section \ref{subsec:intro:notation}) \\
        $H,V$ & state spaces with $V\subset H$ (Section \ref{subsec:intro:notation}) \\
        $V_r$ & reduced state space (Section \ref{subsec:rom})\\ 
        $W(0,T)$  &  state trajectory space (Section \ref{subsec:intro:notation}) \\
        $U$ & control space (Section \ref{subsec:intro:notation}) \\
        $U_r$ & reduced control space \eqref{eq:ONB} \\
        $L^2(0,T;U)$ & control trajectory space (Section \ref{subsec:intro:notation})\\
         $\calJ_U$  & {Riesz} isomorphism $\calJ_U:U\to U' $ (Section \ref{subsec:intro:notation})\\
         $y,p,u$ & state, adjoint state, control (Section \ref{subsec:foms}) \\
         $y^r,p^r,u^r$ & reduced state, adjoint state, control (Section \ref{subsec:rom}) \\
         $J$  & cost function in $y,u$ \eqref{eq:OCP} \\
         $\hat J$  & state-eliminated cost function in $u$ \eqref{redOCP} \\
          $\hat J^r$  & reduced state-eliminated cost function in $u$ \eqref{eq:ROM_OCP}\\
          $\min\limits_{u\in L^2(0,T;U)} \hat J(u)$  & {\FOM-\OCP} \eqref{redOCP}\\
           $\min\limits_{u\in L^2(0,T;U)} \hat J^r(u)$  & state-reduced {\OCP} \eqref{SubEq3} \\
            $\min\limits_{u\in L^2(0,T;U_r)} \hat J^r(u)$ & state- and control-reduced {\OCP} \eqref{eq:fullROM_OCP}   \\
        $A,B,C$  & system operators (Section \ref{subsec:foms})\\
         $L$ & Lagrange function \eqref{eq:lagrange}\\
		\bottomrule
	\end{tabular}
\end{table}
%
\section{Reduced-order models (\ROMs) with control- and state-space reduction}\label{sec:roms}

In this section, we introduce the {\FOM} and the control and state {\ROM}.

\subsection{Full-order model (\FOM)}\label{subsec:foms}

We consider time-varying optimal control problems of the form
\begin{subequations}\label{SubEq2}
    \begin{align}\label{eq:OCP}
        &\min J(y,u)\coloneqq\int^{T}_{0}\tfrac{1}{2}\,\norm{C(t)y(t)-y_d(t)}{H}^2+\tfrac{\beta}{2} \norm{u(t)}{U}^2\dt
        \end{align}
        s.t. $(y,u)\in W(0,T)\times L^2(0,T;U)$ solving
        \begin{align}\begin{cases}
            \ddt y(t)+A(t)y(t) = B u(t) \in V'&\text{in } (0,T),\\
            y(0)=y_0 & \text{in }H.
        \end{cases}\label{eq:FOM_PDE}
    \end{align}
\end{subequations}
We make the following assumptions throughout this paper:
\begin{enumerate}
    \item [] \textbf{(A1)}: $A\in L^\infty(0,T;\calL(V,V'))$ to be coercive uniformly in $t$, in the sense that there exists a constant $\eta_V>0$ with
    \begin{equation}\label{eq:coercivity}
        {\langle A(t)v,v\rangle}_{V',V}\geq \eta_V\norm{v}{V}^2 \quad \text{for all }t\in (0,T)\text{ and }v\in V;
    \end{equation}
    \item [] \textbf{(A2)}: $C\in L^\infty(0,T;\calL(H,H))$, $B\in \calL(U,V')$, $\beta>0$, $y_d\in L^2(0,T;H)$, and $y_0\in H$.
\end{enumerate}
\begin{remark}
{
    The measurability in the assumption $A\in L^\infty(0,T;\calL(V,V'))$ (and analogously $C\in L^\infty(0,T;\calL(H,H))$) is understood in the sense that the scalar function $t\mapsto \langle A(t)v,w\rangle_{V',V} $ is measurable for all $v,w\in V$.
     The measurability assumption is needed to make the integral expressions well-defined, e.g., for the weak solution concept $y\in W(0,T)$ used throughout the paper, and the definition of the Lagrange functional in \eqref{eq:lagrange}.}
\end{remark}
Note that these assumptions are made for simplicity and can be relaxed to, e.g., more general observation operators and unstable evolution operators (see Remark \ref{rem:1} below).
Under these assumptions, \eqref{eq:FOM_PDE} and \eqref{SubEq2} are well-posed. To be precise, for every control $u\in L^2(0,T;U)$ there exists a unique state $y=y(u)\in W(0,T)$ (cf., e.g., \cite[Chapter 7]{Eva10}). Therefore, the cost function
\begin{align*}
    \hat J(u)\coloneqq J(y(u),u)\quad\text{for }u\in L^2(0,T;U)
\end{align*}
is well-defined and \eqref{SubEq2} is equivalent to the problem
\begin{align}
    \label{redOCP}
    \min \hat J(u)\quad\text{s.t.}\quad u\in L^2(0,T;U).
\end{align}
We call \eqref{redOCP} the {\FOM-\OCP}. Moreover, the reduced problem is strictly convex and admits a unique minimizer $\bar u\in L^2(0,T;U)$ (cf., e.g., \cite[Chapter 1]{hinze2008optimization}). To formulate the corresponding optimality condition, we introduce the \emph{adjoint equation} for $y\in L^2(0,T;H)$ as
\begin{equation}
\label{eq:FOMadjoint}
\begin{cases}
-\ddt {p}(t)+A'(t) p(t) = C'(t)(C(t)) y(t)-y_d(t))\in V'  &\text{in } (0,T),\\
 p(T)= 0 & \text{in }H.
\end{cases}
\end{equation}
We infer from \textbf{(A1)} and \textbf{(A2)} that there exists a unique solution $p=p(y)\in W(0,T)$ for any $y\in L^2(0,T;V)$. Therefore, the minimizer
$\bar u $ is characterized by the first-order sufficient optimality condition
\begin{equation}\label{eq:FOM_optcond}
    \nabla \hat J(\bar u) = \calJ_{U}^{-1}B'\bar p+\beta \bar u = 0,
\end{equation}
where the optimal adjoint state $\bar p=p(\bar y)\in W(0,T)$ is given as the solution of \eqref{eq:FOMadjoint} for the optimal state $\bar y=y(\bar u)$, solving \eqref{eq:FOM_PDE} for $u = \bar u$. {Note that in \eqref{eq:FOM_optcond}, $\calJ_U^{-1}$ denotes the {Riesz} isomorphism on the control space $U$ as introduced in Section \ref{subsec:intro:notation}.}
\begin{remark}\label{rem:1}
    For the construction of the reduced control space $U_r$ below, the operator $B$ in \eqref{eq:FOM_PDE} is chosen to be time-independent. However, as can be seen in \eqref{eq:keyequality} below, it can have an affine time-dependent structure, e.g., $B(t)=b(t)B$ for some $b\in L^\infty(0,T;\R)$.
    In this case, \eqref{eq:coercivity} can be replaced by the Garding inequality without loss of generality (w.l.o.g.), that is, with the existence of $\eta_V>0$ and $\eta_H\geq 0$, such that
    \begin{equation}\label{eq:coercivity_gen}\nonumber
        {\langle A(t)v,v\rangle}_{V',V}\geq \eta_V\norm{v}{V}^2-\eta_H \norm{v}{H}^2\quad \text{for all }t\in (0,T)\text{ and }v\in V.
    \end{equation}
    Further, the framework developed below directly applies to more general observation operators $C\in L^\infty(0,T;\calL(H,\tilde H))$ for some output Hilbert space $\tilde H$. Incorporating maximal parabolic regularity theory and interpolation-space embeddings can enable the use of more general observation operators $C$.
\end{remark}
\subsection{Reduced-order model (\ROM)}\label{subsec:rom}

The difficulty in an efficient numerical treatment of \eqref{eq:OCP} arises from the infinite dimensionality (or high-dimensionality after discretization) of the control space $U$ and the state space $V$.

A standard {\MOR} approach to accelerate computations is to project the {\PDE} \eqref{eq:FOM_PDE} onto a reduced (low-dimensional) subspace $V_r=\linspan(v_1,\ldots,v_r)\subset V$ for $r\in \N$. This leads to the \emph{state-reduced} \OCP
\begin{subequations}\label{SubEq3}
    \begin{align}\label{eq:ROM_OCP}
        \min \hat J^r(u)\coloneqq \int_0^T\tfrac{1}{2}\norm{C(t)y^r(t)-y_d(t)}{H}^2+\tfrac{\beta}{2}\norm{u(t)}{U}^2\dt\quad\text{s.t.}\quad u\in L^2(0,T;U),
    \end{align}
    where $y^r=y^r(u)\in W^r(0,T)$ solves the reduced state equation
    \begin{align}
        \begin{cases}
            \ddt y^r(t)+A(t)y^r(t) = Bu(t)\in V_r'  &\text{in } (0,T),\\
            y^r(0)=y_0^r & \text{in }H.
        \end{cases}\label{eq:ROM_PDE}
    \end{align}
\end{subequations}
Here, $y_0^r=\Pi_{V_r}^H y_0\in V_r$ is the orthogonal projection onto $V_r$ w.r.t. the inner product of $H$. The well-posedness of the reduced state equation and the state-reduced {\OCP} are inherited from the well-posedness of the \FOM. The solution to \eqref{eq:ROM_PDE} can be represented as $y^r=\sum_{i=1}^r \mathbf{y}^r_i v_i$ with $\mathbf{y}\in H^1(0,T;\R^r)$ (cf. \cite[Proposition 1.27]{GubV17}). As before, the optimality condition for the reduced unique minimizer $\hat u^r\in L^2(0,T;U)$ reads
\begin{equation} \label{eq:halfrom_opcond}
    \nabla\hat J^r(\hat u^r) = \calJ_{U}^{-1}B'\hat p^r+\beta \hat u^r = 0,
\end{equation}
for $\hat p^r=\sum_{i=1}^r \hat{\mathbf{p}}^r_i v_i$ with $\hat{\mathbf{p}}^r\in  H^1(0,T;\R^r)$ solving
\begin{equation}
\label{eq:ROMadjoint}
\begin{cases}
-\ddt \hat p^r (t)+A'(t) \hat p^r(t) = C'(t)(C(t)\hat y^r(t)-y_d(t))\in V_r'  &\text{in } (0,T),\\
\hat p^r(T)= 0 & \text{in }H,
\end{cases}
\end{equation}
for $\hat y^r$ solving \eqref{eq:ROM_PDE} for $u=\hat u^r$. The optimality condition \eqref{eq:halfrom_opcond} implies
\begin{equation}\label{eq:keyequality}
    \hat u^r=-\frac{1}{\beta}\calJ_{U}^{-1}B'\hat p^r=-\frac{1}{\beta}\sum_{i=1}^r \hat{\mathbf p}^r_i\calJ_{U}^{-1}B'v_i.
\end{equation}
By setting
\begin{subequations}
    \begin{align}
        u_i & =  \calJ_{U}^{-1}B'v_i&&\hspace{-30mm}\text{for }i=1,\ldots,r\label{eq:Ur_construction},\\
        \hat{\mathbf u}^r_i&=-\tfrac{1}{\beta}\hat{\mathbf p}^r_i&&\hspace{-30mm}\text{for }i=1,\ldots,r\label{eq:Ur_construction_coeff},
    \end{align} 
\end{subequations}
we can write the optimality condition \eqref{eq:halfrom_opcond} as 
\begin{equation}\label{eq:keyequality2}
    \hat u^r=\sum_{i=1}^r \hat{\mathbf u}^r_i u_i.
\end{equation}
This means that the optimal control of the state-reduced {\OCP} already lies in a reduced control space $\hat u^r\in L^2(0,T;U_r)$ with $U_r=\linspan(u_1,\ldots,u_r)$. {Note that in general, it holds $\dim U_r \leq r$ , and if the operator $B\in \calL(U,V_r')$ is not surjective, and thus $B'\in \calL(V_r, U')$ not injective, the $u_i$'s are linearly dependent and one has $\dim U_r < \dim V_r$. Applying, e.g., Gram-Schmidt orthogonalization leads to a basis $(\tilde u_i)_{i=1}^{r_u}$ with 
\begin{equation}\label{eq:ONB}
    U_r=\linspan(\tilde u_1,\ldots,\tilde u_{ r_u}) \quad \text{for } r_u \coloneqq \dim U_r \leq r.
\end{equation}}
Due to \eqref{eq:keyequality2} we consider the \emph{control- and state-reduced} \OCP
\begin{align}\label{eq:fullROM_OCP}
    \min \hat J^r(u^r)\quad\text{s.t.}\quad {u^r}\in L^2(0,T;U_r),
\end{align}
where $y^r=y^r(u)$ solves the reduced state equation \eqref{eq:ROM_PDE}.
Since $U_r\subset U$ is a Hilbert space itself, \eqref{eq:fullROM_OCP} admits the unique minimizer $\bar u^r$ by assumptions \textbf{(A1)} and \textbf{(A2)}. Since the state-reduced minimizer already satisfies $\hat u^r\in L^2(0,T;U_r)$ by \eqref{eq:keyequality2}, we obtain $\hat u^r=\bar u^r$ by uniqueness of both minimizers.

\begin{lemma}\label{lem:equival}
    We have $\bar u^r = \hat u^r$. In particular, the associated optimal states and optimal adjoints are equal, that is, $\bar y^r\coloneqq\hat y^r$ and $\bar p^r \coloneqq \hat p^r$.
\end{lemma}

Hence, in a numerical implementation, one can always restrict oneself to the control and state-reduced {\OCP} \eqref{eq:fullROM_OCP}.
\begin{remark}[Optimality condition of the control- and state-reduced OCP]\label{eq:numerical_solution_state_reduced_OCP}
    Given the reduced control space $U_r$ as in \eqref{eq:ONB}, we define the positive definite product matrix $\mathbf{M}_U\in \R^{{r_u\times r_u}}$ via
    \begin{align*}
        \big(\mathbf{M}_U\big)_{ij} \coloneqq{\langle \tilde u_j,\tilde u_i\rangle}_{U}\quad\text{for } i,j=1,\ldots,{r_u},
    \end{align*}
    and the discrete control space $\R_U^{{r_u}}$, that is $\R^{{r_u}}$ endowed with the product
    \begin{align*}
        {\langle \bu, \mathbf v\rangle}_{\R_U^{{r_u}}}\coloneqq \bu^\top \mathbf{M}_U \mathbf v\qquad\text{for }\bu, \mathbf v\in \R^{{r_u}}.
    \end{align*}
    To write the optimality condition of \eqref{eq:fullROM_OCP} in an unconstrained manner, we eliminate the constraint $u(t)\in U_r$, by defining the operator
    \begin{align*}
        B_r: \R_U^{{r_u}}\to V',\quad\bu \to B\bigg( \sum_{i=1}^{{r_u}} \tilde u_i \bu_i\bigg).
    \end{align*}
    Then, \eqref{eq:fullROM_OCP} is equivalent to
    \begin{subequations}\label{SubEq5}
        \begin{align}\label{eq:fullROM_OCP_dis}
            &\min_{}\mathbf{\hat J}^r({\bu})\coloneqq \int_0^T\tfrac{1}{2}\norm{C(t)y^r(t)-y_d(t)}{H}^2+\tfrac{\beta}{2}\norm{\bu(t)}{\R^{{r_u}}_U}^2\mathrm dt~\text{ s.t. }\bu\in L^2(0,T;\R_{U}^{{r_u}}),
        \end{align}
        where $y^r=y^r(\bu)$ solves 
        \begin{align}
            \begin{cases}
                \ddt y^r(t)+A(t)y^r(t) = B_r\bu(t)\in V_r'  &\text{in } (0,T),\\
                y^r(0)=y_0^r & \text{in }H 
            \end{cases}\label{eq:fullROM_PDE_dis} 
        \end{align}
   \end{subequations}
    with the following sufficient optimality condition for the unique minimizer $\bar \bu^r\in \R^{{r_u}}$
    \begin{align*}
        \nabla \mathbf{\hat J}^r(\bar\bu^r)=\bar \bu^r + \mathbf{M}_U^{-1}B_r'\hat p^r=0
    \end{align*}
    with $\hat p^r$ as in \eqref{eq:ROMadjoint} and $\bar \bu^r = \hat \bu^r$ for $\hat \bu^r$ as in \eqref{eq:Ur_construction_coeff}.
\end{remark}

\section{A posteriori error estimation}\label{sec:apost}

While {\OCP} \eqref{eq:fullROM_OCP} can be advantageous for efficient numerical realization, the equivalent formulation in {\OCP} \eqref{SubEq3} is primarily useful for theoretical purposes: error analysis and convergence results for the control- and state-reduced problem can be traced back to the solely state-reduced case. 

\subsection{Error estimator for the optimal control}

First, we recall the well-known residual-based \emph{a posteriori} error estimator for linear coercive operators. {A proof was given, e.g., in \cite[Proposition 2.19, Proposition 2.21]{haasdonk2017reduced}, but, for the sake of completeness, a proof is presented in Appendix \ref{ap:prooflemma4}.}

\begin{lemma}\label{lem:lemApost}
    Let $\calU$ be a real Hilbert space, $d\in \calU'$ and $\calQ\in \calL(\calU,\calU')$ coercive with constant $\beta>0$, that is
    \begin{align*}
        {\langle\calQ u,u\rangle}_{\calU',\calU}\ge\beta\,{|u|}_\calU^2\quad\text{for all }u\in\calU.
    \end{align*}
    Let $u\in \calU$ satisfy the variational problem
    \begin{equation}
        \label{eq:coercive_est_standard}
        \calQ u=d
    \end{equation}
    and $\tilde u\in \calU$ be given. Then we have the error-residual equivalence 
    \begin{equation}
        \label{eq:relation_coercive_est_standard}
        \tfrac{1}{\norm{\calQ}{}}\norm{d-\calQ\tilde u}{\calU'}\leq \norm{u-\tilde u}{\calU} \leq \tfrac{1}{\beta}\norm{d-\calQ\tilde u}{\calU'}.
    \end{equation}
\end{lemma}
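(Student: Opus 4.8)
The plan is to exploit the single identity that, because $u$ solves \eqref{eq:coercive_est_standard}, the residual is exactly the image of the error under $\calQ$: writing $e \coloneqq u - \tilde u$, linearity of $\calQ$ gives $d - \calQ\tilde u = \calQ u - \calQ\tilde u = \calQ e$ in $\calU'$. Both inequalities in \eqref{eq:relation_coercive_est_standard} then follow by testing this identity appropriately.

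For the upper bound I would use coercivity against the error itself. Since $\calQ e = d - \calQ\tilde u$, we have
\begin{align*}
\beta\,{|e|}_\calU^2 \le {\langle \calQ e, e\rangle}_{\calU',\calU} = {\langle d - \calQ\tilde u, e\rangle}_{\calU',\calU} \le \norm{d-\calQ\tilde u}{\calU'}\,{|e|}_\calU,
\end{align*}
using the definition of the dual norm in the last step. If $e = 0$ the bound is trivial; otherwise divide by ${|e|}_\calU > 0$ to obtain ${|e|}_\calU \le \tfrac{1}{\beta}\norm{d-\calQ\tilde u}{\calU'}$.

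For the lower bound I would instead apply the operator norm of $\calQ$ directly to the identity $d - \calQ\tilde u = \calQ e$, giving $\norm{d-\calQ\tilde u}{\calU'} = \norm{\calQ e}{\calU'} \le \norm{\calQ}{}\,{|e|}_\calU$; dividing by $\norm{\calQ}{}$ (which is positive, since $\beta > 0$ forces $\calQ \neq 0$) yields $\tfrac{1}{\norm{\calQ}{}}\norm{d-\calQ\tilde u}{\calU'} \le {|e|}_\calU$. Chaining the two estimates gives \eqref{eq:relation_coercive_est_standard}.

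There is no real obstacle here: the argument is a textbook residual–error equivalence, and the only points worth a word of care are the degenerate case $e = 0$ in the division step of the upper bound and the observation that $\norm{\calQ}{} > 0$ so that the lower bound is well defined. I would state these parenthetically rather than belabor them.
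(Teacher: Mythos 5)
Your proof is correct and follows essentially the same route as the paper: the error--residual identity $\calQ(u-\tilde u)=d-\calQ\tilde u$, coercivity tested against the error for the upper bound, and continuity of $\calQ$ for the lower bound. The only cosmetic difference is that for the lower bound you invoke the operator-norm inequality $\norm{\calQ e}{\calU'}\leq\norm{\calQ}{}\,\norm{e}{\calU}$ directly, whereas the paper obtains the same estimate by testing the error equation with the Riesz representative of the residual; these are the same argument.
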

Next, we cast the {\FOM} optimality condition \eqref{eq:FOM_optcond} into the form \eqref{eq:coercive_est_standard} for $\calU=L^2(0,T;U)$. Therefore, let $\hat y$ be the solution to the {\FOM 
-\PDE} \eqref{eq:FOM_PDE} for $u=0$. Then we can write each solution of \eqref{eq:FOM_PDE} as $y(u)=\calS u+\hat y$, where $\calS\in \calL(L^2(0,T;U), W(0,T))$ is the linear solution operator for \eqref{eq:FOM_PDE} for $y_0=0$. Defining $\hat y_d \coloneqq y_d-C\hat y$, we see that \eqref{redOCP} can be written as
 \begin{align}
     \min\hat J(u)=\tfrac{1}{2}\norm{C\calS u-\hat y_d}{L^2(0,T;H)}^2+\tfrac{\beta}{2}\norm{u}{L^2(0,T;U)}^2\text{ s.t. }u\in L^2(0,T;U),
 \end{align}
and hence \eqref{eq:FOM_optcond} as
\begin{align*}
    \nabla\hat J(\bar u)=\beta \bar u + \calJ_U^{-1}\calS'C'(C\calS\bar u-\hat y_d) =0.
\end{align*}
Defining $\calQ: \calU \to \calU'\simeq L^2(0,T;U')$ and $d\in \calU'$ as
\begin{align}\label{eq:Q}
    \calQ &\coloneqq \beta I+ \calS'C'C\calS,\\
    d &\coloneqq \calS'C'\hat y_d,\label{eq:d}
\end{align}
we obtain the optimality condition in the form \eqref{eq:coercive_est_standard} for $u= \bar u$.
\begin{corollary}[Error estimator for the first-order optimality condition]\label{cor:error_est_control}
    Let $\bar u\in L^2(0,T;U)$ be the solution of \eqref{SubEq2} and $\tilde u\in L^2(0,T;U)$ be arbitrary. Defining the upper and lower bounds, and true error
    \begin{align*}
        \Db(\tilde u)\coloneqq & \tfrac{1}{\beta}\norm{\nabla\hat J(\tilde u)}{L^2(0,T;U)},\quad\Da(\tilde u)\coloneqq \tfrac{1}{\beta+\norm{\calC\calS}{}^2}\norm{\nabla\hat J(\tilde u)}{L^2(0,T;U)},\\
        e_u(\tilde u)\coloneqq & \norm{\bar u -\tilde u}{L^2(0,T;U)},
    \end{align*}
    we have the \emph{a posteriori estimate}
    \begin{equation}\label{eq:error_est}
         \Da(\tilde u)\leq e_u(\tilde u) \leq  \Db(\tilde u).
    \end{equation}
\end{corollary}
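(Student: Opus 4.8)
The plan is to obtain Corollary~\ref{cor:error_est_control} as a direct instance of Lemma~\ref{lem:lemApost}, applied to the Hilbert space $\calU = L^2(0,T;U)$ with the operator $\calQ$ and data $d$ defined in \eqref{eq:Q}--\eqref{eq:d}. First I would verify the hypotheses of Lemma~\ref{lem:lemApost}: the operator $\calQ = \beta I + \calS'C'C\calS$ is bounded and linear (composition of bounded operators, using $\calS\in\calL(L^2(0,T;U),W(0,T))$, the continuous embedding $W(0,T)\hookrightarrow L^2(0,T;H)$, and $C\in L^\infty(0,T;\calL(H,H))$), it is self-adjoint, and it is coercive with constant $\beta$ since $\langle \calS'C'C\calS u, u\rangle = \norm{C\calS u}{L^2(0,T;H)}^2 \ge 0$ and the $\beta I$ term supplies the full lower bound $\langle \calQ u, u\rangle \ge \beta\norm{u}{\calU}^2$. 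By construction in the paragraph preceding the statement, $\bar u$ solves $\calQ\bar u = d$, which is exactly \eqref{eq:coercive_est_standard}.

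Next I would identify the residual. For arbitrary $\tilde u\in L^2(0,T;U)$ we have $d - \calQ\tilde u = -(\beta\tilde u + \calS'C'(C\calS\tilde u - \hat y_d)) = -\calJ_U \nabla\hat J(\tilde u)$, using the expression for $\nabla\hat J$ derived in the excerpt; hence $\norm{d-\calQ\tilde u}{\calU'} = \norm{\nabla\hat J(\tilde u)}{L^2(0,T;U)}$ after applying the Riesz isometry (here I would be a little careful to note that $\nabla\hat J(\tilde u)$ is, with the paper's convention, the Riesz representative in $\calU$ rather than the element of $\calU'$, so the norms agree). Plugging this into \eqref{eq:relation_coercive_est_standard} gives
\begin{align*}
    \tfrac{1}{\norm{\calQ}{}}\norm{\nabla\hat J(\tilde u)}{L^2(0,T;U)} \le \norm{\bar u - \tilde u}{L^2(0,T;U)} \le \tfrac{1}{\beta}\norm{\nabla\hat J(\tilde u)}{L^2(0,T;U)}.
\end{align*}
The upper bound is already $\Db(\tilde u)$, so that half is done.

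For the lower bound it remains to show $\norm{\calQ}{} \le \beta + \norm{\calC\calS}{}^2$, so that $\tfrac{1}{\norm{\calQ}{}} \ge \tfrac{1}{\beta + \norm{\calC\calS}{}^2} = \Da(\tilde u)/\norm{\nabla\hat J(\tilde u)}{L^2(0,T;U)}$ and the claimed lower bound follows by monotonicity. This is the estimate $\norm{\calQ}{} = \norm{\beta I + \calS'C'C\calS}{} \le \beta\norm{I}{} + \norm{\calS'C'C\calS}{} \le \beta + \norm{C\calS}{}^2$, where the last step uses $\norm{\calS'C'C\calS}{} = \norm{(C\calS)'(C\calS)}{} = \norm{C\calS}{}^2$ for the bounded operator $C\calS : \calU \to L^2(0,T;H)$, and where $\calC$ in the statement denotes the multiplication operator induced by $C$ on $L^2(0,T;H)$ so that $\norm{\calC\calS}{} = \norm{C\calS}{}$. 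I do not anticipate a genuine obstacle here; the only points requiring care are the bookkeeping between the functional $\nabla\hat J$ and its dual-space counterpart (so that the residual norm is literally $\norm{\nabla\hat J(\tilde u)}{L^2(0,T;U)}$), and the routine operator-norm bound $\norm{\calQ}{}\le\beta+\norm{\calC\calS}{}^2$ which slightly weakens the sharp constant $\norm{\calQ}{}$ of Lemma~\ref{lem:lemApost} in exchange for a computable expression.
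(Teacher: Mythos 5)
Your proposal is correct and follows essentially the same route as the paper's own proof: both verify the coercivity of $\calQ=\beta I+\calS'C'C\calS$ with constant $\beta$, apply Lemma~\ref{lem:lemApost}, identify the Riesz representative of the residual $d-\calQ\tilde u$ with $\nabla\hat J(\tilde u)$, and bound $\norm{\calQ}{}\le\beta+\norm{C\calS}{}^2$. The paper states these steps more tersely; your additional care about the $\calU$-versus-$\calU'$ bookkeeping and the identity $\norm{\calS'C'C\calS}{}=\norm{C\calS}{}^2$ simply makes explicit what the paper leaves implicit.
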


\begin{proof}
    Note that $\calQ\in \calL(L^2(0,T;U),L^2(0,T;U)')$ -- defined in \eqref{eq:Q} -- is coercive, since $\beta >0$ by assumption \textbf{(A2)} and $\calS'C'C\calS$ is non-negative. Now we apply Lemma~\ref{lem:lemApost} to $\calU = L^2(0,T;U)$, $\calQ$, $b$ as in \eqref{eq:Q}, \eqref{eq:d}, and $\tilde u \in \calU$. Evaluating $\calJ_U^{-1}(\calQ\bar u^r-d)=\nabla\hat J(\tilde u)$ and estimating the norm of $\calQ$, implies the result.
\end{proof}

\begin{remark}\label{rem:apost}
    Note that the upper bound in \eqref{eq:error_est} is the same as the perturbation bound given in \cite{troltzsch2009pod} in the case without control constraints.
    We are particularly interested in the case $\tilde u = \bar u^r$. For this special choice of $\tilde u$, the proof given in, e.g., \cite[Theorem 3.7]{kartmann2024certifiedmodelpredictivecontrol}, is also possible since $\bar u^r$ fulfills the optimality condition of the state-reduced \OCP. However, in contrast to \cite{kartmann2024certifiedmodelpredictivecontrol,karcher2018certified, ali2020reduced}, the proof given here completely avoids using the optimality condition of the reduced model ($\tilde u$ can be arbitrary), and directly provides a lower bound without additional effort. This is a benefit in practice, since we can apply the error estimators in \eqref{eq:error_est} to each (possibly non-optimal) iterate $u_k$ of an iterative algorithm ($\tilde u=u_k$) and do not have to solve the {\ROM} up to optimality. {We explicitly note that the evaluation of the error estimator in \eqref{eq:error_est} requires a calculation of the {\FOM} state and adjoint at $\tilde u$ and thus, is not online-efficient. However, in the adaptive setting introduced below in Algorithm~\ref{alg:ROM_OPT}, they can serve as new data for updating the reduced-order model. 
    If one is interested in obtaining an offline-online decomposable upper bound of $\bar \Delta(\tilde u)$, one can add $\pm \nabla J^r(\tilde u)$ on the right-hand side of \eqref{eq:error_est} to estimate
    \begin{equation}\nonumber
        \Db(\tilde u)\leq \tfrac{1}{\beta}\norm{\nabla\hat J^r(\tilde u)}{}+\tfrac{1}{\beta}\norm{\nabla\hat J^r(\tilde u)-\nabla\hat J(\tilde u)}{}=\tfrac{1}{\beta}\norm{\nabla\hat J^r(\tilde u)}{}+\tfrac{1}{\beta}\norm{B'(p^r(y^r(\tilde u))-p(y(\tilde u))}{}
    \end{equation}
    The first term can be computed cheaply, while the second can be estimated via residual-based a posteriori estimators for state and adjoint (cf., e.g., \cite{karcher2018certified,kartmann2024certifiedmodelpredictivecontrol}). Using offline-online decomposition, their evaluation also depends solely on the \ROM. As far as we can see, such a construction is not available for the lower bound.}
    Finally, we note that incorporating maximal parabolic regularity theory and interpolation-space embeddings can lead to sharper estimates of $\norm{CS}{}$ in specific settings, which would improve the lower bound.
\end{remark}
%

\subsection{Optimal value function error representation}

In this section, we provide an error representation for the error in the optimal value function based on \cite{ran}, in which the error terms, which depend on the control-space reduction, vanish (see \eqref{eq:val_error_repr}).

We introduce the space $\calX\coloneqq W(0,T) \times L^2(0,T;U) \times W(0,T)$ endowed with the standard product topology. In addition, the Lagrange function $L$ is given by
\begin{align}\label{eq:lagrange}
    {L(x)= J(y,u)+{\langle Bu-Ay-\dot y,p\rangle}_{L^2(0,T;V'),L^2(0,T;V)}
     + \langle y_0-y(0), p(0)\rangle_{H}}
\end{align}
 for $x=(y,u,p)\in \calX$. Let $\bar u\in U$ be the optimal solution to \eqref{SubEq2}, $\bar y=y(\bar u)$ the associated optimal state (that is, $\bar y$ solves \eqref{eq:FOM_PDE} for $u=\bar u$) and $\bar p=p(\bar y)$ the associated optimal adjoint (that is, $\bar p$ solves \eqref{eq:FOMadjoint} for $y=\bar y$). We set $\bar x=(\bar y,\bar u, \bar p)\in \calX$. The {\FOM} optimality system  \eqref{eq:FOM_PDE} for $u=\bar u$, \eqref{eq:FOM_optcond}, and \eqref{eq:FOMadjoint} for $y=\bar y$, can be compactly written as
\begin{equation}\label{eq:FOM_opt_cond}
    L'(\bar x)=0 \quad\text{in } \calX'.
\end{equation}
Similarly, let $\hat u^r\in$ be the optimal solution to \eqref{SubEq3} and $\hat y^r=y^r(\hat u^r)$ and $\hat p^r=p^r(\hat y^r)$ the associated optimal state and adjoint, respectively. Then we set $\hat  x^r=(\bar y^r, \hat u^r, \bar p^r)\in \hat\calX$ with $\hat\calX\coloneqq W^r(0,T) \times L^2(0,T;U)\times W^r(0,T)$ supplied with the standard product topology. The state-reduced {\ROM} optimality system, \eqref{eq:ROM_PDE} for $u=\hat u^r$, \eqref{eq:halfrom_opcond}, and \eqref{eq:ROMadjoint}, is given by
\begin{align}\label{eq:ROM_opt_cond}
    L'(\hat x^r)=0 \quad\text{in } \hat\calX'.
\end{align}
Note that due to Lemma~\ref{lem:equival}, we have $\hat x^r=\bar x^r\coloneqq (\bar y^r,\bar u^r, \bar p^r)\in \calX_r\coloneqq W^r(0,T) \times L^2(0,T;U_r)\times W^r(0,T)$. Note that $\hat x^r$ solves the optimality system \eqref{eq:ROM_opt_cond} in variational form on $\hat\calX'$ without approximation of the control space $L^2(0,T;U)$. Hence, the {\FOM} optimal control $\bar u$ can be used as a test function in \eqref{eq:argument} below to make the control approximation term vanish in \eqref{eq:val_error_repr}.
\begin{theorem}[Optimal value function error representation]
    \label{theo:optimal_val}
    For $\bar u\in L^2(0,T;U)$ and $\bar u^r\in L^2(0,T;U_r)$, we have the following error representation 
    \begin{equation}\label{eq:val_error_repr}
        \hat J(\bar u)-\hat J^r(\bar u^r)= \tfrac{1}{2}\inf\big\{ L'_y(\bar x^r)(\bar y-y^r)+ L'_p(\bar x^r)(\bar p-p^r)\,\big|\,y^r,p^r\in W^r(0,T)\big\},
    \end{equation}
    where $L_z'(\bar x^r)$ is the partial Fr\'echet derivative of $L$ at $\bar x^r$ for $z\in \{y,u,p\}$.
\end{theorem}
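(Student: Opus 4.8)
The plan is to exploit the general error-representation framework from the reference \cite{ran}: for a Lagrangian $L$ whose critical points characterize both the true and the reduced problem, the difference of optimal values admits a representation of the form $\tfrac12 L'(\bar x^r)(\bar x - z)$ for any $z$ in the reduced space, because $L$ is quadratic and $L'(\bar x)=0$. Concretely, I would start from the exact identity obtained by a second-order Taylor expansion of $L$ around $\bar x^r$ evaluated at $\bar x$: since $L$ is quadratic in $x=(y,u,p)$ (the only nonquadratic-looking term $\langle Bu-Ay,p\rangle$ is actually bilinear, hence quadratic overall), one has
\begin{equation*}
  L(\bar x) = L(\bar x^r) + L'(\bar x^r)(\bar x - \bar x^r) + \tfrac12 L''(\bar x^r)(\bar x-\bar x^r,\bar x-\bar x^r).
\end{equation*}
A second expansion, of $L$ around $\bar x$ evaluated at $\bar x^r$, together with $L'(\bar x)=0$ from \eqref{eq:FOM_opt_cond}, gives $L(\bar x^r) = L(\bar x) + \tfrac12 L''(\bar x)(\bar x^r-\bar x,\bar x^r-\bar x)$; since $L''$ is constant (quadratic form), subtracting the two yields the clean identity $\hat J(\bar u) - \hat J^r(\bar u^r) = \tfrac12 L'(\bar x^r)(\bar x - \bar x^r)$, using that $L(\bar x)=\hat J(\bar u)$ and $L(\bar x^r)=\hat J^r(\bar u^r)$ because the constraint terms vanish on feasible points.

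Next I would decompose $L'(\bar x^r)(\bar x-\bar x^r)$ into the three partial-derivative contributions $L_y'(\bar x^r)(\bar y-\bar y^r) + L_u'(\bar x^r)(\bar u-\bar u^r) + L_p'(\bar x^r)(\bar p-\bar p^r)$. The crucial observation, and the one that makes the control reduction invisible in the final bound, is that the $u$-component vanishes: $L_u'(\bar x^r)$ is, up to the Riesz map, exactly $\nabla\hat J^r(\bar u^r)$ evaluated against directions in $L^2(0,T;U)$, but by Lemma~\ref{lem:equival} and the optimality condition \eqref{eq:halfrom_opcond} the reduced gradient $\nabla\hat J^r(\bar u^r)=\calJ_U^{-1}B'\bar p^r + \beta\bar u^r$ is zero. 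Hence $L_u'(\bar x^r)=0$ in $L^2(0,T;U)'$, and in particular $L_u'(\bar x^r)(\bar u - \bar u^r)=0$. Here it matters that Lemma~\ref{lem:equival} is stated for the solely state-reduced problem, whose optimality condition holds with test functions in the \emph{full} control space $L^2(0,T;U)$, not merely $L^2(0,T;U_r)$ — this is precisely why $\bar u - \bar u^r \notin L^2(0,T;U_r)$ causes no trouble.

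It then remains to handle the $y$- and $p$-parts. For these I would argue that $\bar y^r,\bar p^r \in W^r(0,T)$ and that $L_y'(\bar x^r)$, $L_p'(\bar x^r)$ \emph{annihilate} $W^r(0,T)$: indeed $L_p'(\bar x^r)$ acting on $\varphi$ is $\langle B\bar u^r - A\bar y^r,\varphi\rangle$ which vanishes for $\varphi\in L^2(0,T;V_r)$ by the reduced state equation \eqref{eq:ROM_PDE}, and similarly $L_y'(\bar x^r)$ acting on $\psi\in W^r(0,T)$ encodes the reduced adjoint equation \eqref{eq:ROMadjoint} and vanishes (modulo the $y(0)$ boundary term, which is killed because the reduced initial condition is the $H$-projection of $y_0$ — one must check the boundary contribution from integration by parts in $W(0,T)$ carefully, possibly absorbing it into the definition of the Lagrangian or noting $\psi(0)\in V_r$). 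Consequently, for \emph{any} $y^r,p^r\in W^r(0,T)$ we may replace $\bar y-\bar y^r$ by $\bar y - y^r$ and $\bar p - \bar p^r$ by $\bar p - p^r$ without changing the value $L_y'(\bar x^r)(\bar y-\bar y^r) + L_p'(\bar x^r)(\bar p-\bar p^r)$; since this quantity is constant over all such choices, it equals its own infimum over $y^r,p^r\in W^r(0,T)$, giving \eqref{eq:val_error_repr}.

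The main obstacle I anticipate is the bookkeeping of the boundary terms when computing the partial Fréchet derivatives $L_y'$ and $L_p'$ on the space $W(0,T)$: the pairing $\langle Bu-Ay,p\rangle_{L^2(0,T;V'),L^2(0,T;V)}$ as written does not obviously encode the initial condition $y(0)=y_0$, so one must either integrate by parts in time to surface the $\langle y(0)-y_0,\cdot\rangle_H$ term or build feasibility of the state/adjoint into the admissible set so that those terms are automatically zero; getting $L_y'(\bar x^r)$ to genuinely annihilate $W^r(0,T)$ (and not merely $L^2(0,T;V_r)$) hinges on $\psi(0)\in V_r$ and on $\bar p^r$ satisfying the reduced adjoint equation with the correct terminal/initial data. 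Everything else — the two Taylor expansions, the cancellation of the $u$-term, and the passage to the infimum — is essentially formal once the quadratic structure and Lemma~\ref{lem:equival} are in hand.
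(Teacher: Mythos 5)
Your proposal is correct and follows essentially the same route as the paper: the two-Taylor-expansion step for the quadratic Lagrangian is just an unpacked version of the paper's fundamental-theorem-of-calculus/trapezoidal-rule argument yielding $\hat J(\bar u)-\hat J^r(\bar u^r)=\tfrac12 L'(\bar x^r)(\bar x-\bar x^r)$, and your observation that $L_u'(\bar x^r)$, $L_y'(\bar x^r)$, $L_p'(\bar x^r)$ annihilate $L^2(0,T;U)$ and $W^r(0,T)$ respectively is exactly how the paper justifies substituting the test element $\tilde x=(y^r,\bar u,p^r)\in\hat\calX$ and passing to the infimum. The boundary-term bookkeeping you flag is a real subtlety, but the paper glosses over it in the same way, so there is no substantive divergence.
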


\begin{proof}
    Let $e=(e_y,e_u,e_p)\coloneqq \bar x-\hat x^r\in\calX$. By the definition of $L$, $\bar x$, and $\hat x^r$, Lemma~\ref{lem:equival}, and the fundamental theorem of calculus, we have
    \begin{align*}
        \mathsf{err}_{\hat J}\coloneqq \hat J(\bar u)-\hat J(\bar u^r)=\hat J(\bar u)-\hat J(\hat u^r)=L(\bar x)-L(\hat x^r)=\int_{0}^1 L'(\hat x^r+t e)( e)\dt.
    \end{align*}
    Adding and subtracting $\nicefrac{L'(\hat x^r)(e)}{2}$ and using \eqref{eq:FOM_opt_cond} tested with $e\in \calX$, leads to
    \begin{align*}
         \mathsf{err}_{\hat J} =& \int_{0}^1L'(\hat x^r+t e)(e)\dt \pm \frac{1}{2} L'(\hat x^r)(e) -\frac{1}{2}L'(\bar x)(e)
         =\frac{1}{2}L'(\hat x^r)(e), 
    \end{align*}
    since the integral and the terms $-\nicefrac{L'(\hat x^r)(e)}{2}$ and $-\nicefrac{L'(\bar x)(e)}{2} $ cancel, the latter are a trapezoidal approximation of the former and $L'$ is linear (cf. \cite{ran}). Due to the {\ROM} optimality condition \eqref{eq:ROM_opt_cond} for $\hat x^r$, the last term can be formulated for all $\tilde x=(\tilde y, \tilde u, \tilde p)\in \hat \calX$ as
    \begin{align}\label{eq:argument}
        L'(\hat x^r)(e)= L'(\hat x^r)(\bar x-\tilde x)=L_y'(\hat x^r)(\bar y-\tilde y)+L_p'(\hat x^r)(\bar p-\tilde p)+L_u'(\hat x^r)(\bar u-\tilde u).
    \end{align}
    Since $\hat x^r$ solves the solely state-reduced optimality condition, we can choose $\tilde x=(y^r,\bar u, p^r)\in \hat \calX$ for arbitrary $y^r,p^r\in W^r(0,T)$. Hence, the term depending on $L_u'(\hat x^r)$ vanishes and since $y^r,p^r\in W^r(0,T)$ are arbitrary, we obtain \eqref{eq:val_error_repr}.
\end{proof}
Similar to, e.g., \cite{azmi2025stabilizationparabolictimevaryingpdes,karcher2018certified}, one can obtain a residual-based a posteriori error estimator from Theorem~\ref{theo:optimal_val}, which can be evaluated in practice. 
%
\section{Combined adaptive state and control reduction}\label{sec:adaptivealgo}
%
As we have seen in Section~\ref{subsec:rom}, the space $U_r$ is determined by the space $V_r$. Unfortunately, good spaces $V_r$ are, in general, not known a priori. Moreover, an offline construction of $V_r$ usually depends on full-order model solves and can be too costly for a single solve of {\FOM-\OCP}, even though it might pay off in parametrized settings. Hence, for a single solve of {\FOM-\OCP} (with fixed parameters), we propose the following adaptive Algorithm~\ref{alg:ROM_OPT}, that only needs an initial guess $u_0\in L^2(0,T;U)$ to simultaneously construct an approximate solution for the {\FOM-\OCP} as well as corresponding approximation spaces $U_r$ and $V_r$. Note that this algorithm can also be used to speed up the offline training for {\ROMs} for parametrized {\OCPs} to avoid solving {\FOM-\OCPs} for many parameters, or, if necessary, to refine an initially given space $V_r$.
%
\subsection{Construction of the reduced spaces with \POD}\label{subsec:POD}

Let $r\in \N$. To construct the space $V_r={\linspan(v_1,\ldots, v_r)}$ out of a countable data (or \emph{snapshot}) set $S\subset L^2(0,T;V)$, we focus on \POD
\begin{equation}\label{eq:POD}
    \min\sum_{s\in S}\big|s-\proj_{V_r}^Vs\big|_{L^2(0,T;V)}^2\quad \text{s.t.}\quad\{v_i\}_{i=1}^r\subset V,~{\langle v_i,v_j\rangle}_V=\delta_{ij}~(i,j=1,\ldots, r).
\end{equation}
The solution to this problem can be found using the linear correlation operator $\cor_S:V\to V$
\begin{equation}\label{eq:cor_op}
    \cor_Sv \coloneqq \sum_{s\in S}\int_{0}^T{\langle s(t),v\rangle}_V\,s(t) \dt \quad \text{for }v\in V.
\end{equation}
We have the following result.

\begin{theorem}\label{theo:POD}
    The operator $\cor_S\in \calL(V,V)$ is non-negative, self-adjoint, and compact.
    Define the {\POD} space 
    \begin{equation}\label{eq:closureW}
        \calW_S \coloneqq \closure{\range(\cor_S)}\subset V.
    \end{equation}
    Let $\bar r_S \coloneqq \dim (\calW_S)\in \N\cup \{ \infty\}$ be the maximal {\POD} rank.
    There exist non-negative eigenvalues $(\lambda_i)_{i\in \N}$ and associated orthonormal eigenfunctions $(v_i)_{i\in \N}\subset \calW_S$ of $\cor_S$ with
    \begin{align}
        \cor_S v_i & = \lambda_i v_i, \quad \lambda_1 \geq \ldots \geq \lambda_{\bar r_S} > \lambda_{\bar r_S+1}=\ldots=0,\\
        \cor_Sv &= \sum_{i=1}^\infty {\langle v_i,v \rangle}_V\,v_i,\\
        V &= \ker(\cor_S) \oplus \closure{\linspan(v_1,\ldots,v_{\bar r_{S}})}= \ker(\cor_S) \oplus \calW_S.\label{eq:spectradecomp}
    \end{align}
    The first $r\in \{1,\ldots,\bar r_S \}$ eigenfunctions $(v_i)_{i=1}^{r}$ solve \eqref{eq:POD}, i.e., $V_r=\closure{\linspan\{v_1,\ldots, v_r \}}$ and we have $V_{\bar r_S}=\calW_S$.
\end{theorem}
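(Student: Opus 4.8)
The plan is to establish the spectral-theoretic claims first and then connect them to the minimization problem \eqref{eq:POD}. The operator $\cor_S$ in \eqref{eq:cor_op} is a (possibly infinite) sum of rank-one-per-snapshot contributions; self-adjointness and non-negativity are immediate from its bilinear form $\langle \cor_S v, w\rangle_V = \sum_{s\in S}\int_0^T \langle s(t),v\rangle_V \langle s(t),w\rangle_V\dt$, which is symmetric in $v,w$ and yields $\langle \cor_S v,v\rangle_V = \sum_{s\in S}\|s\|_{L^2(0,T;V)}^2$-weighted squares $\geq 0$. For compactness, I would first observe that each summand $v\mapsto \int_0^T\langle s(t),v\rangle_V\, s(t)\dt$ is a bounded operator with $\|\cdot\|\le \|s\|_{L^2(0,T;V)}^2$, and that the partial sums over finite subsets of $S$ are finite-rank (range contained in the closed span of the finitely many snapshots, viewed in $L^2(0,T;V)$, then integrated back into $V$). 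The series converges in operator norm because $\sum_{s\in S}\|s\|_{L^2(0,T;V)}^2<\infty$ is implicit in $S$ being a valid snapshot set (or must be assumed; I expect the paper's standing assumptions on $S$ to provide this). A norm limit of finite-rank operators is compact, giving compactness of $\cor_S$.

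Next I would invoke the spectral theorem for compact self-adjoint non-negative operators on the separable Hilbert space $V$: there is an orthonormal system of eigenfunctions $(v_i)$ with eigenvalues $\lambda_1\ge\lambda_2\ge\cdots\ge 0$ accumulating only at $0$, spanning $\overline{\range(\cor_S)}=\calW_S$, while $\ker(\cor_S)$ is its orthogonal complement — this gives the decomposition \eqref{eq:spectradecomp} and the eigenexpansion $\cor_S v=\sum_i \lambda_i\langle v_i,v\rangle_V v_i$. The strict inequality $\lambda_{\bar r_S}>\lambda_{\bar r_S+1}=0$ is just the definition of $\bar r_S=\dim\calW_S$ as the number of strictly positive eigenvalues. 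I would note $\range(\cor_S)$ may fail to be closed when $\bar r_S=\infty$, which is why the closure appears in \eqref{eq:closureW}.

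For the optimality claim — that the first $r$ eigenfunctions solve \eqref{eq:POD} — I would rewrite the objective using orthonormality: for any admissible $\{w_i\}_{i=1}^r$, $\sum_{s\in S}\|s-\Pi_{V_r}^V s\|_{L^2(0,T;V)}^2 = \sum_{s\in S}\|s\|^2 - \sum_{i=1}^r \langle \cor_S w_i, w_i\rangle_V$, so minimizing the projection error is equivalent to maximizing $\sum_{i=1}^r\langle\cor_S w_i,w_i\rangle_V$ over orthonormal $r$-frames. This is the classical Ky Fan / Courant–Fischer trace-maximization characterization: the maximum equals $\lambda_1+\cdots+\lambda_r$ and is attained by the top $r$ eigenfunctions. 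I would either cite this (e.g. \cite{GubV17}) or sketch the standard greedy/induction argument. The identity $V_{\bar r_S}=\calW_S$ then follows since the first $\bar r_S$ eigenfunctions span exactly $\calW_S$ by the spectral decomposition.

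The main obstacle I anticipate is not any single step but rather the bookkeeping around the possibly-infinite index set $S$ and the possibly-infinite rank $\bar r_S$: one must be careful that $\sum_{s\in S}\|s\|^2_{L^2(0,T;V)}$ is finite (so that $\cor_S$ is well-defined and compact as a norm-limit of finite-rank operators), that the closures in \eqref{eq:closureW} and \eqref{eq:spectradecomp} are handled correctly when $\range(\cor_S)$ is not closed, and that the trace-maximization argument is stated so it covers the case $r=\bar r_S$ (and the edge case where some $\lambda_i$ coincide, making eigenfunctions non-unique — the solution $V_r$ as a subspace is still unique precisely when $\lambda_r>\lambda_{r+1}$, otherwise one has a set of solutions, which I would remark on). Everything else is a direct application of standard functional analysis once these technical points are pinned down.
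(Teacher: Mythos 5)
Your proposal is correct and follows essentially the same route as the paper, which simply defers to \cite[Lemma 1.13, Theorem 1.15]{GubV17} and the spectral theorem for compact self-adjoint operators; your writeup just spells out the standard details (bilinear-form symmetry and non-negativity, compactness as a norm limit of finite-rank operators, and the Ky Fan trace-maximization argument for optimality of the leading eigenfunctions). Your caveat about needing $\sum_{s\in S}\norm{s}{L^2(0,T;V)}^2<\infty$ for countable $S$ is well taken, though in the paper's adaptive algorithm the snapshot sets are always finite, so this is harmless in context.
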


\begin{proof}
    This is a combination of \cite[Lemma 1.13, Theorem 1.15]{GubV17} and the spectral theorem for compact, self-adjoint operators.
\end{proof}

In Theorem~\ref{theo:POD}, we defined $V_r$ as the closure of the span to include the case $\bar r_S=\infty$ for theoretical considerations. Since we are in an adaptive setting, we are interested in how to refine a {\POD} basis. There can be two ways to extend the {\POD} basis. On the one hand, (if a finite {\POD} rank $r$ is chosen) one may increase $r\leq \bar r_S$. For the theoretical analysis presented here, we consider the choice $r= \bar r_S$. However, in practice, other choices for $r$ are possible, and the algorithm below can be adapted (see Remark~\ref{rem:choice_r}). On the other hand, one may update or enlarge the snapshots set $S$. We collect some properties of the operator $\cor_S$ in view of enlarging the snapshot set.
\begin{lemma}\label{lem:helpPOD}
    Let $S\subset S_+\subset L^2(0,T;V)$ be two snapshot sets. It holds
    \begin{enumerate}
        \item [\em a)]  $\ker(\cor_{S_+})\subset \ker(\cor_S)$;
        \item [\em b)]  $\closure{\range(\cor_{S})}\subset \closure{\range(\cor_{S_+})}$, i.e., $\calW_S\subset\calW_{S_+}$;
    \end{enumerate}
\end{lemma}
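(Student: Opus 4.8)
The plan is to build everything on the non-negativity of the correlation operator already recorded in Theorem~\ref{theo:POD}, combined with the monotonicity that comes from enlarging the snapshot set. The starting point is the quadratic-form identity obtained directly from \eqref{eq:cor_op}: for every $v\in V$,
\begin{align*}
    \langle \cor_S v, v\rangle_V=\sum_{s\in S}\int_0^T|\langle s(t),v\rangle_V|^2\dt ,
\end{align*}
together with the analogous identity for $S_+$. Since $S\subset S_+$, the right-hand side for $S_+$ contains all the (non-negative) terms of the right-hand side for $S$, whence $\langle \cor_S v, v\rangle_V\le\langle \cor_{S_+} v, v\rangle_V$ for all $v\in V$; this monotonicity of the quadratic forms is the one fact the whole proof rests on.

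For part a) I would take $v\in\ker(\cor_{S_+})$, so that $\langle \cor_{S_+} v, v\rangle_V=0$. Since every summand in the identity above (written for $S_+$) is non-negative, each of them must vanish; in particular $\int_0^T|\langle s(t),v\rangle_V|^2\dt=0$ for every $s\in S\subset S_+$, hence $\langle s(t),v\rangle_V=0$ for a.e.\ $t\in(0,T)$ and every $s\in S$. Substituting this into \eqref{eq:cor_op} makes each integrand vanish a.e., so $\cor_S v=0$, i.e.\ $v\in\ker(\cor_S)$. (One could equivalently invoke that a non-negative self-adjoint operator annihilates precisely those $v$ with $\langle \cor_S v,v\rangle_V=0$, via its square root, but the pointwise argument needs no functional calculus.)

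For part b) I would deduce the inclusion from a) by passing to orthogonal complements. The spectral decomposition \eqref{eq:spectradecomp} (equivalently, self-adjointness of $\cor_S$ from Theorem~\ref{theo:POD}) gives $\calW_S=\closure{\range(\cor_S)}=\ker(\cor_S)^\perp$, and likewise $\calW_{S_+}=\ker(\cor_{S_+})^\perp$. Because taking orthogonal complements reverses inclusions, the inclusion $\ker(\cor_{S_+})\subset\ker(\cor_S)$ established in a) yields $\calW_S=\ker(\cor_S)^\perp\subset\ker(\cor_{S_+})^\perp=\calW_{S_+}$, which is exactly the claim.

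I do not foresee a genuine obstacle. The only point that deserves a moment's care is the well-definedness and convergence of the series defining $\cor_S$ (and hence of the quadratic-form identity) when $S$ is countably infinite; but this belongs to the standing setup and is already guaranteed by Theorem~\ref{theo:POD}. Everything else is a term-by-term non-negativity argument plus the standard kernel--range orthogonality for self-adjoint operators, so the write-up should be short.
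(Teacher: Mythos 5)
Your proof is correct and follows essentially the same route as the paper: non-negativity/monotonicity of the quadratic forms for part a), and passing to orthogonal complements via self-adjointness for part b). Your term-by-term argument in a) (concluding $\langle s(t),v\rangle_V=0$ a.e.\ for each $s\in S$ and hence $\cor_S v=0$ directly from the definition) is in fact slightly more explicit than the paper's, which splits $\cor_{S_+}=\cor_S+\cor_{S_+\setminus S}$ and passes from $\langle\cor_S v,v\rangle_V=0$ to $\cor_S v=0$ without comment.
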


\begin{proof}
    \begin{enumerate}
        \item [a)] Let $v\in \ker(\cor_{S_+})$. Then by \eqref{eq:cor_op}, $0=\cor_{S_+}v=\cor_{S}v+\cor_{S_+\setminus S}v$ and hence
        \begin{equation}\nonumber
            0=\langle \cor_{S}v,v\rangle_{V}+\langle \cor_{S_+\setminus S}v,v\rangle_{V}
        \end{equation}
        Since both $\cor_{S}$ and $\cor_{S_+\setminus S}$ are non-negative operators according to Theorem~\ref{theo:POD}, it holds $ \cor_{S}v=0$ and thus $v\in \ker(\cor_S)$.
        \item [b)] Part a) is equivalent to $\ker(\cor_{S})^\perp\subset \ker(\cor_{S_+})^\perp$ and it holds $\ker(\cor_{S})^\perp=\closure{\range(\cor_{S})}$ ($\ker(\cor_{S_+})^\perp=\closure{\range(\cor_{S_+})}$), since $\cor_S$ ($\cor_{S_+}$) is self-adjoint by Theorem~\ref{theo:POD}.
    \end{enumerate}
\end{proof}

The following lemma provides a criterion to determine whether an enlarged snapshot set contains additional information that yields a \POD$ $ basis of higher maximal rank.

\begin{lemma}\label{lem:rank_lemma}
    Let $S\subset S_{+}\subset L^2(0,T;V)$ be two snapshot sets and $\bar r_S=\dim (\calW_S)\in \N$. Then, $\bar r_S<\bar r_{S_+}$, if and only if there exists $\tilde s\in S_{+}\setminus S $ with 
    \begin{equation}\label{eq:stilde}
        \norm{(I-\Pi_{\calW_S}^V)\tilde s}{L^2(0,T;V)}>0.
    \end{equation}
\end{lemma}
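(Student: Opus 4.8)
The plan is to prove the contrapositive-style equivalence by using the orthogonal decomposition $V = \ker(\cor_S)\oplus\calW_S$ from \eqref{eq:spectradecomp} together with the splitting $\cor_{S_+} = \cor_S + \cor_{S_+\setminus S}$ coming from \eqref{eq:cor_op} and the monotonicity facts already established in Lemma~\ref{lem:helpPOD}. Since $\calW_S\subset\calW_{S_+}$ by Lemma~\ref{lem:helpPOD}~b), and $\bar r_S=\dim(\calW_S)<\infty$ by hypothesis, the strict inequality $\bar r_S<\bar r_{S_+}$ is equivalent to the statement that the inclusion $\calW_S\subsetneq\calW_{S_+}$ is proper, i.e. there exists a vector $w\in\calW_{S_+}$ with $w\perp\calW_S$, equivalently $(I-\Pi_{\calW_S}^V)w\neq 0$ for some $w\in\range(\cor_{S_+})$. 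So the heart of the argument is to translate this "there is new mass in $\calW_{S_+}$" condition into the concrete snapshot condition \eqref{eq:stilde}.

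For the direction "\eqref{eq:stilde}$\Rightarrow\bar r_S<\bar r_{S_+}$": suppose some $\tilde s\in S_+\setminus S$ satisfies $\norm{(I-\Pi_{\calW_S}^V)\tilde s}{L^2(0,T;V)}>0$. Pick a time $t$ with $(I-\Pi_{\calW_S}^V)\tilde s(t)\neq 0$ in $V$; I would then show that $\cor_{S_+}$ has range not contained in $\calW_S$. The clean way is to argue by contradiction: if $\calW_{S_+}=\calW_S$, then $\ker(\cor_{S_+})=\ker(\cor_S)$, so every $v\in\ker(\cor_S)$ is annihilated by $\cor_{S_+}$, hence (using non-negativity and $\cor_{S_+}=\cor_S+\cor_{S_+\setminus S}$ as in the proof of Lemma~\ref{lem:helpPOD}~a)) also by $\cor_{S_+\setminus S}$, giving in particular $\langle\cor_{\{\tilde s\}}v,v\rangle_V=0$, i.e. $\int_0^T\langle\tilde s(t),v\rangle_V^2\dt=0$, so $\tilde s(t)\in\ker(\cor_S)^{\perp\perp}$-type reasoning... more directly: $\langle\tilde s(t),v\rangle_V=0$ for a.e. $t$ and all $v\in\ker(\cor_S)=\calW_S^\perp$, which forces $\tilde s(t)\in\calW_S$ for a.e. $t$, i.e. $(I-\Pi_{\calW_S}^V)\tilde s=0$ in $L^2(0,T;V)$, contradicting \eqref{eq:stilde}. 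Hence $\calW_S\subsetneq\calW_{S_+}$ and since $\bar r_S<\infty$ this gives $\bar r_S<\bar r_{S_+}$.

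For the converse "$\bar r_S<\bar r_{S_+}\Rightarrow$\eqref{eq:stilde}": assume \eqref{eq:stilde} fails, i.e. $(I-\Pi_{\calW_S}^V)s=0$ in $L^2(0,T;V)$ for every $s\in S_+\setminus S$, meaning $s(t)\in\calW_S$ for a.e. $t$ and all such $s$; combined with the analogous (trivial) property for $s\in S$, every snapshot in $S_+$ lands in $\calW_S$ a.e. I would then read off from \eqref{eq:cor_op} that $\range(\cor_{S_+})\subset\calW_S$, because each summand $\int_0^T\langle s(t),v\rangle_V\,s(t)\dt$ is a Bochner integral of $\calW_S$-valued functions and $\calW_S$ is closed, hence $\closure{\range(\cor_{S_+})}\subset\calW_S$, i.e. $\calW_{S_+}\subset\calW_S$; together with Lemma~\ref{lem:helpPOD}~b) this yields $\calW_{S_+}=\calW_S$ and $\bar r_{S_+}=\bar r_S$, contradicting the assumption. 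This completes the equivalence.

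The main obstacle I anticipate is the careful handling of the time variable and the measure-theoretic "for a.e.\ $t$" quantifiers — in particular justifying that $\int_0^T\langle s(t),v\rangle_V\,s(t)\dt\in\calW_S$ whenever $s(t)\in\calW_S$ for a.e.\ $t$ (closedness of $\calW_S$ plus properties of the $V$-valued Bochner integral), and conversely extracting from $\langle\cor_{\{\tilde s\}}v,v\rangle_V=0$ the pointwise statement $\tilde s(t)\perp v$ for a.e.\ $t$ uniformly over a (separable, hence countably-generated) dense subset of $\calW_S^\perp$. None of this is deep, but it is where the bookkeeping must be done precisely; everything else is a direct consequence of the decomposition \eqref{eq:spectradecomp} and Lemma~\ref{lem:helpPOD}.
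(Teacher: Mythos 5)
Your proposal is correct and follows essentially the same route as the paper's proof: both directions rest on the splitting $\cor_{S_+}=\cor_S+\cor_{S_+\setminus S}$, the identities $\ker(\cor_S)=\calW_S^\perp$ from Lemma~\ref{lem:helpPOD} and \eqref{eq:spectradecomp}, and the closedness of $\calW_S$ to push the pointwise membership $s(t)\in\calW_S$ through the Bochner integral. The only difference is that you run the direction ``\eqref{eq:stilde} implies $\bar r_S<\bar r_{S_+}$'' by contradiction (deducing $\langle\tilde s(t),v\rangle_V=0$ for a.e.\ $t$ and all $v\in\calW_S^\perp$ via a countable dense subset), which in fact handles the quantifier exchange more carefully than the paper's direct choice of a single $v\neq0$ in $\ker(\cor_S)$.
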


\begin{proof}
    On the one hand, let $|(I-\Pi_{\calW_S}^V)s|_{L^2(0,T;V)}=0$ for all $s\in S_{+}\setminus S $. This implies $s(t)\in \calW_S$ for almost all $t\in(0,T)$.  Then also 
    \begin{align*}
        \cor_{S_+}v=\cor_{S}v+\sum_{s\in S_+\setminus S}\int_{0}^T\langle s(t),v\rangle_{V}s(t) \ \mathrm dt\in  \calW_S,
    \end{align*}
    since the linear subspace $\calW_S$ is closed by definition in \eqref{eq:closureW}. Hence, we have $\range(\cor_{S_+})\subset \calW_S$. Taking the closure leads together with Lemma~\ref{lem:helpPOD}-b) to
    \begin{align*}
         \closure{\range(\cor_{S})}\subset \closure{\range(\cor_{S_+})}\subset\calW_S\coloneqq \closure{\range(\cor_{S})}
    \end{align*}
    Hence, $\closure{\range(\cor_{S})}= \closure{\range(\cor_{S_+})}$. This results in $\bar r_{S}=\bar r_{S_+}$ by definiton of the maximal rank in Theorem~\ref{theo:POD}.\hfill\\
    On the other hand, assume the existence of $\tilde s\in S_{+}\setminus S$ with  \eqref{eq:stilde}. W.l.o.g., we assume that $\tilde s\in S_{+}\setminus S$ is unique with this property. For $v\in \ker(\cor_S)$, it holds
    \begin{align*}
        \cor_{S_+}v = \cor_{S}v+\sum_{s\in S_+\setminus S}\int_0^T{\langle s(t),v\rangle}_V\,s(t)\dt=\int_0^T{\langle \tilde s(t),v\rangle}_V\,\tilde s(t)\dt.
    \end{align*}
    Since $\cor_S$ is self-adjoint, it holds $v\in \ker(\cor_S)=\range(\cor_S)^\perp=\closure{\range(\cor_S)}^\perp=\calW_S^\perp$.Then, it follows for $\tilde s(t)=\Pi_{\calW_S}^V\tilde s(t)+(I-\Pi_{\calW_S}^V)\tilde s(t)$ for almost all $t\in (0,T)$
    \begin{equation}\label{eq:helpPOD}
        {\langle\cor_{S_+}v, v\rangle}_V=\int_0^T\big|{\langle (I-\Pi_{\calW_S}^V)\tilde s(t),v\rangle}_V\big|^2\dt.
    \end{equation}
    Note that $\ker(\cor_S)$ is not trivial, because otherwise we would have by \eqref{eq:spectradecomp}, that $\calW_S = V$, which contradicts \eqref{eq:stilde}.
    Hence, we can additionally choose $v\neq 0$ and \eqref{eq:helpPOD} implies $\langle\cor_{S_+}v, v\rangle_V>0$, by the assumption \eqref{eq:stilde} on $\tilde s$. Hence, $\cor_{S_+}v\neq 0$ and therefore $v\notin \ker(\cor_{S_+})$.
    Together with the fact that $\ker(\cor_{S_+})\subset \ker(\cor_{S})$ by Lemma~\ref{lem:helpPOD}-a), we get $\ker(\cor_{S_+})\subsetneq \ker(\cor_{S})$. Hence, (with a similar argument as in the proof of Lemma~\ref{lem:helpPOD}-b)   $\range(\cor_{S})\subsetneq \range(\cor_{S_+})$  and therefore $\bar r_S < \bar r_{S_+}$.
\end{proof}

Once a space $V_r$ is constructed, we compute $u_1,\ldots,u_r$ via \eqref{eq:Ur_construction} and perform a Gram-Schmidt orthonormalization to obtain the space $U_r$ as in \eqref{eq:ONB}.
\subsection{Convergence analysis}

We outline the adaptive optimization in Algorithm~\ref{alg:ROM_OPT}. Each iteration $k\in \N$ involves three major building blocks, which are described in the following:
\begin{enumerate}
    \item Error certification and termination check ({Lines}~\ref{algo:line:FOMca}-\ref{algo:line:errorestend}): for the current iterate $u_k$, the lower and upper \emph{a posteriori} error estimates in \eqref{eq:error_est} are computed. As noted in Remark~\ref{rem:apost} the evaluation of the bounds need the computation of the {\FOM} quantities $y(u_k)$ and $p(y(u_k))$;
    \item Construction of the reduced spaces ({Lines}~\ref{algo:line:newsansphots}-\ref{algo:line:newsansphotsend}): if the iterate is not sufficiently accurate, better reduced spaces $V_{r_{k+1}}$ and $U_{r_{k+1}}$ are constructed via {\POD} (cf. Section~\ref{subsec:POD}) with the snapshot set $S_{k+1}=S_k\cup\{y(u_k),p(y(u_k))\}$, where $S_k$ is the snapshot set used in the previous iteration. Thus, the {\FOM} quantities to evaluate the error estimator are reused as new data to update the reduced spaces;
    \item Solution of the control- and state-reduced {\OCP} ({Line}~\ref{algo:line9}): once new reduced spaces are constructed, the control- and state-reduced {\OCP} \eqref{eq:fullROM_OCP} (see also \eqref{eq:fullROM_OCP_dis}) is solved to obtain the next iterate $u_{k+1}=\bar u^{r_{k+1}}$. Since this step is independent of the \FOM, it can be performed very efficiently if the dimension of $V_{r_{k+1}}$ and $U_{r_{k+1}}$ is small.
\end{enumerate}

In the remainder of this section, we prove that the data selection strategy in {Line~\ref{algo:line:newsansphots} of Algorithm \ref{alg:ROM_OPT}} leads to a convergent algorithm. 
\begin{algorithm}[t!]
\caption{(Adaptive {\POD} Optimization)}\label{alg:ROM_OPT}
	\begin{algorithmic}[1]
		\Require Initial guess $u_0\in L^2(0,T;U)$, tolerance $\varepsilon\geq 0$, constants $\beta$, $\norm{C\calS}{}>0$
        \State Set $S_{0}=\emptyset$;
        \For{$k=0,1,2,...$}
            \State\label{algo:line:FOMca}Compute {\FOM} state $y(u_k)$ in \eqref{eq:FOM_PDE} for $u=u_k$ and adjoint $p(y(u_k))$ in \eqref{eq:FOMadjoint} for \phantom{123}\hspace{-1.4mm} $y=y(u_k)$; 
            \State Compute the upper and {(optionally)} lower bound $\Db(u_k),\Da(u_k)$ in \eqref{eq:error_est} using \phantom{123}\hspace{-1.4mm} $y(u_k)$ and  $p(y(u_k))$;
            \If{$\Db(u_k)=\tfrac{1}{\beta}{\norm{\nabla \hat J(u_k)}{L^2(0,T;U)}}\leq \varepsilon$}
                 \Return $u_k$; \label{algo:line:errorest}
            \EndIf\label{algo:line:errorestend}
            \State\label{algo:line:newsansphots}Construct $V_{r_{k+1}}$ via {\POD} according to \eqref{eq:POD} for $S_{k+1}=S_{k}\cup \{y(u_k),p(y(u_k)\}$ for \phantom{123}\hspace{-1.4mm} $r_{k+1}=\bar r_{S_{k+1}}$;
            \State\label{algo:line:newsansphotsend}Construct $U_{r_{k+1}}$ according to \eqref{eq:Ur_construction} and \eqref{eq:ONB} by orthonormalization;
            \State\label{algo:line9}Solve control- and state-reduced {\OCP} \eqref{eq:fullROM_OCP} with $V_{r_{k+1}}$, $U_{r_{k+1}}$ for $u_{k+1}=\bar u^{r_{k+1}}$;
        \EndFor
	\end{algorithmic}
\end{algorithm}
We need two preparatory results.
\begin{lemma}[{\ROM} convergence]\label{lem:ROMconv}
    Let $(v_r)_{r\in \N}$ be an orthonormal basis of the separable space $V$ and let $V_r=\linspan(v_1,\ldots,v_r)$. Then, we have $\bar y^r \to \bar y$, $\bar p^r\to \bar p$ in $W(0,T)$ and $\bar u^r\to \bar u$ in $L^2(0,T;U)$ as $r\to \infty$.
\end{lemma}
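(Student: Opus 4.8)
The plan is to argue by a weak--to--strong compactness scheme, working with the \emph{solely state}-reduced optimality system \eqref{eq:ROM_PDE},\eqref{eq:halfrom_opcond},\eqref{eq:ROMadjoint}, which by Lemma~\ref{lem:equival} is also solved by $(\bar y^r,\bar u^r,\bar p^r)$. First I would derive uniform-in-$r$ bounds: since $\bar u^r$ minimizes $\hat J^r$ over all of $L^2(0,T;U)$, comparison with $u\equiv 0$ gives $\tfrac{\beta}{2}\|\bar u^r\|_{L^2(0,T;U)}^2\le\hat J^r(\bar u^r)\le\hat J^r(0)\le C$; the standard energy estimate for \eqref{eq:ROM_PDE} (testing with $\bar y^r(t)\in V_r$, using the $t$-uniform coercivity \textbf{(A1)} and $|\bar y^r(0)|_H=|\Pi_{V_r}^Hy_0|_H\le|y_0|_H$) then bounds $\bar y^r$ in $L^2(0,T;V)\cap C([0,T];H)$, and through the equation in $W(0,T)$; the analogous estimate for \eqref{eq:ROMadjoint}, whose right-hand side $C'(C\bar y^r-y_d)$ is bounded in $L^2(0,T;H)$, bounds $\bar p^r$ in $W(0,T)$. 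Along a subsequence I thus obtain $\bar u^r\rightharpoonup u^\ast$ in $L^2(0,T;U)$ and $\bar y^r\rightharpoonup y^\ast$, $\bar p^r\rightharpoonup p^\ast$ in $W(0,T)$, and by the Aubin--Lions lemma (using $V\hookrightarrow\hookrightarrow H$) also $\bar y^r\to y^\ast$, $\bar p^r\to p^\ast$ strongly in $L^2(0,T;H)$.

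Next I would identify the limit. Since $(v_r)_{r\in\N}$ is an orthonormal basis of $V$, $\bigcup_r V_r$ is dense in $V$, so testing \eqref{eq:ROM_PDE} and \eqref{eq:ROMadjoint} against $\varphi(t)v_j$ with $\varphi\in C_c^\infty(0,T)$ and fixed $j$ (admissible for all $r\ge j$) and passing to the limit shows that $(y^\ast,p^\ast)$ solves \eqref{eq:FOM_PDE} for $u=u^\ast$ and \eqref{eq:FOMadjoint} for $y=y^\ast$, with the correct end conditions ($\bar y^r(0)=\Pi_{V_r}^Hy_0\to y_0$ in $H$, and the strong $L^2(0,T;H)$-convergence of $C\bar y^r$ handles the adjoint forcing). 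Passing to the limit in the reduced gradient identity \eqref{eq:keyequality}, $\bar u^r=-\tfrac1\beta\calJ_U^{-1}B'\bar p^r$, which is a bounded linear, hence weakly continuous, relation, yields $u^\ast=-\tfrac1\beta\calJ_U^{-1}B'p^\ast$, so that $(y^\ast,u^\ast,p^\ast)$ satisfies the full {\FOM} optimality system \eqref{eq:FOM_PDE},\eqref{eq:FOM_optcond},\eqref{eq:FOMadjoint}. By uniqueness of the {\FOM} minimizer, $(y^\ast,u^\ast,p^\ast)=(\bar y,\bar u,\bar p)$, and since the limit is independent of the subsequence, the full sequences converge weakly.

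It remains to upgrade these to strong convergences. For the control I would use a cost-value sandwich: on the one hand $\limsup_r\hat J^r(\bar u^r)\le\limsup_r\hat J^r(\bar u)=\hat J(\bar u)$, because $\bar u^r$ is the unconstrained minimizer of $\hat J^r$ and $y^r(\bar u)\to\bar y$ in $W(0,T)$ (hence in $L^2(0,T;H)$) by the classical convergence of the semidiscrete Galerkin approximation of \eqref{eq:FOM_PDE} at a fixed control; on the other hand, writing $\hat J^r(\bar u^r)=\tfrac12\|C\bar y^r-y_d\|_{L^2(0,T;H)}^2+\tfrac\beta2\|\bar u^r\|_{L^2(0,T;U)}^2$, the strong $L^2(0,T;H)$-convergence of $\bar y^r$ together with weak lower semicontinuity of the norm gives $\liminf_r\hat J^r(\bar u^r)\ge\hat J(\bar u)$. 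Hence $\hat J^r(\bar u^r)\to\hat J(\bar u)$; since the tracking term already converges, $\|\bar u^r\|_{L^2(0,T;U)}\to\|\bar u\|_{L^2(0,T;U)}$, which with $\bar u^r\rightharpoonup\bar u$ in the Hilbert space $L^2(0,T;U)$ yields $\bar u^r\to\bar u$ strongly. Finally I would split $\bar y^r-\bar y=\bigl(y^r(\bar u^r)-y^r(\bar u)\bigr)+\bigl(y^r(\bar u)-y(\bar u)\bigr)$: the first bracket solves the reduced state equation with zero initial datum and control $\bar u^r-\bar u$, hence is bounded in $W(0,T)$ by $C\|\bar u^r-\bar u\|_{L^2(0,T;U)}\to0$ (uniform energy estimate), and the second tends to $0$ in $W(0,T)$ by the same Galerkin convergence as above; the analogous splitting for the adjoint, now additionally using $\bar y^r\to\bar y$ in $W(0,T)$ so that the adjoint right-hand sides converge, gives $\bar p^r\to\bar p$ in $W(0,T)$.

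The compactness and limit-identification steps are routine; the part I expect to be the crux is the passage from weak to strong convergence, in particular the cost-value sandwich, which relies essentially on $\bar u^r$ being the minimizer of the \emph{state}-reduced (not merely control- and state-reduced) problem, together with the $W(0,T)$-convergence of fixed-control Galerkin approximations, which is the single ingredient I would import from the standard theory of parabolic Galerkin schemes. All constants in the energy estimates are uniform in $r$ precisely because of the $t$-uniform coercivity \textbf{(A1)}.
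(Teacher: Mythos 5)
Your proposal is correct and takes essentially the same route as the paper: both arguments first invoke Lemma~\ref{lem:equival} to pass to the solely state-reduced problem \eqref{SubEq3}, after which the paper simply defers to the standard Galerkin convergence proof of \cite[Theorem~3.11]{azmi2025stabilizationparabolictimevaryingpdes}, whose content is exactly what you write out (uniform bounds from $\hat J^r(\bar u^r)\le \hat J^r(0)$, weak compactness, identification of the limit via the optimality system, and the cost-value sandwich to upgrade $\bar u^r\rightharpoonup\bar u$ to strong convergence). The only step deserving an explicit word is the uniform-in-$r$ bound on $\partial_t \bar y^r$ in $L^2(0,T;V')$ (needed both for Aubin--Lions and for convergence in the $H^1(0,T;V')$-component of $W(0,T)$), which hinges on the $V$-stability of the projection onto $V_r$ used to test the Galerkin equation against general $v\in V$ — a standard but nontrivial ingredient that the paper's citation likewise absorbs.
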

\begin{proof}
    Since $\bar u^r=\hat u^r$ by Lemma~\ref{lem:equival}, we can consider the convergence for the state-reduced {\OCP} \eqref{eq:ROM_OCP}. This follows as in the proof of \cite[Theorem 3.11]{azmi2025stabilizationparabolictimevaryingpdes} with the only modification that in \cite{azmi2025stabilizationparabolictimevaryingpdes} controls with values in a finite-dimensional space $U$ have been considered. However, this does not change the proof.
\end{proof}
And we need the following form of an interpolation property of the reduced model.
\begin{lemma}[{\OCP} interpolation property]\label{lem:interpol}
    If $y(\bar u^r), p(y(\bar u^r))\in W^r(0,T)$, then $\bar u^r=\bar u$.
\end{lemma}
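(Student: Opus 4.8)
The plan is to exploit the equivalence $\bar u^r=\hat u^r$ from Lemma~\ref{lem:equival} together with the observation that, under the hypothesis $y(\bar u^r),p(y(\bar u^r))\in W^r(0,T)$, the full-order and reduced-order optimality systems collapse onto the same object. Concretely, I would first recall that $\bar u^r$ satisfies the state-reduced optimality condition \eqref{eq:halfrom_opcond}, i.e., $\nabla\hat J^r(\bar u^r)=\calJ_U^{-1}B'\hat p^r+\beta\bar u^r=0$, where $\hat p^r\in W^r(0,T)$ solves the reduced adjoint equation \eqref{eq:ROMadjoint} driven by $\hat y^r\in W^r(0,T)$, the reduced state for $u=\bar u^r$. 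The goal is to upgrade these reduced equations to the corresponding full-order equations \eqref{eq:FOM_PDE} and \eqref{eq:FOMadjoint}, which then forces $\bar u^r$ to satisfy the {\FOM} optimality condition \eqref{eq:FOM_optcond}, whence $\bar u^r=\bar u$ by uniqueness of the {\FOM} minimizer.

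The key step is the following Galerkin-uniqueness argument. The reduced state equation \eqref{eq:ROM_PDE} is the Galerkin projection of \eqref{eq:FOM_PDE} onto $V_r$: $\hat y^r$ is the unique element of $W^r(0,T)$ satisfying $\langle\ddt\hat y^r+A\hat y^r-B\bar u^r,v_r\rangle=0$ for all test functions $v_r\in V_r$ (in the appropriate $L^2(0,T;\cdot)$ sense), with $\hat y^r(0)=\Pi_{V_r}^H y_0$. On the other hand, by hypothesis the {\FOM} state $y(\bar u^r)$ already lies in $W^r(0,T)$, and it satisfies the \emph{full} variational identity, in particular against all test functions in $V_r$; moreover $y(\bar u^r)(0)=y_0$, and since $y_0$ may only be in $H$ I must check that the projected initial condition is consistent — here the hypothesis $y(\bar u^r)\in W^r(0,T)$ combined with $y(\bar u^r)(0)=y_0$ forces $y_0\in V_r\subset H$, so $\Pi_{V_r}^H y_0=y_0$ and the initial data match. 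By uniqueness of the Galerkin solution (well-posedness of \eqref{eq:ROM_PDE} via \textbf{(A1)}, \textbf{(A2)}), $\hat y^r=y(\bar u^r)$, i.e., the reduced state coincides with the true {\FOM} state at the control $\bar u^r$. Repeating the same reasoning for the adjoint: $\hat p^r\in W^r(0,T)$ is the Galerkin solution of \eqref{eq:ROMadjoint} with right-hand side $C'(C\hat y^r-y_d)=C'(Cy(\bar u^r)-y_d)$, while $p(y(\bar u^r))\in W^r(0,T)$ (again by hypothesis) solves the full adjoint equation \eqref{eq:FOMadjoint} with the same right-hand side and terminal condition $0$; by uniqueness of the reduced adjoint, $\hat p^r=p(y(\bar u^r))$.

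Finally, substituting these identities into the reduced optimality condition gives $\calJ_U^{-1}B'p(y(\bar u^r))+\beta\bar u^r=0$, which is exactly the {\FOM} optimality condition \eqref{eq:FOM_optcond} evaluated at $u=\bar u^r$. Since $\hat J$ is strictly convex with unique minimizer $\bar u$ characterized by \eqref{eq:FOM_optcond}, we conclude $\bar u^r=\bar u$, and consequently the associated states and adjoints agree as well. The main obstacle I anticipate is the careful bookkeeping around the initial/terminal conditions and the precise sense of the Galerkin identities — in particular arguing that $y(\bar u^r)\in W^r(0,T)$ genuinely makes it admissible as a competitor in the reduced equation (trace-in-time continuity $W^r(0,T)\hookrightarrow C([0,T];H)$ is needed to evaluate at $t=0,T$, and one must confirm the projection of the initial datum is consistent rather than merely assuming $y_0\in V_r$). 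Everything else is a routine appeal to uniqueness for coercive parabolic problems.
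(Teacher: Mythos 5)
Your argument is exactly the paper's: use uniqueness of the Galerkin (ROM) state and adjoint problems to conclude $\hat y^r=y(\bar u^r)$ and $\hat p^r=p(y(\bar u^r))$, substitute into the reduced optimality condition \eqref{eq:halfrom_opcond} to recover the {\FOM} condition \eqref{eq:FOM_optcond}, and invoke uniqueness of $\bar u$. Your extra bookkeeping on the initial datum (that $y(\bar u^r)\in W^r(0,T)$ forces $y_0\in V_r$, so $\Pi_{V_r}^H y_0=y_0$) is a correct refinement of a step the paper passes over silently.
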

\begin{proof}
    If $y(\bar u^r), p(y(\bar u^r))\in W^r(0,T)$, then we have by unique solvability of the {\ROM} state and {\ROM} adjoint equation, that $y(\bar u^r)=y^r(\bar u^r)=\bar y^r$ and $ 
    p(y(\bar u^r))=p^r(y^r(\bar u^r))=\bar p^r$. This together with $\bar u^r=\hat u^r$ solving \eqref{eq:halfrom_opcond}, implies
    \begin{align*}
    0=\lambda \bar u^r+ \calJ_{U}^{-1} B'\bar p^r=\lambda \bar u^r+ \calJ_{U}^{-1} B'
    p(y(\bar u^r))
    \end{align*}
    Hence, $\bar u^r$ solves the {\FOM} optimality condition \eqref{eq:FOM_optcond} and by uniqueness it holds $\bar u^r=\bar u$.
\end{proof}
Now we can state a convergence result for Algorithm~\ref{alg:ROM_OPT}.
\begin{theorem}[Convergence of Algorithm~\ref{alg:ROM_OPT}]\label{theo:convergence_algo}
    If $\varepsilon =0$, then {\em Algorithm~\ref{alg:ROM_OPT}} constructs a sequence $(u_k)_{k\in \N}$ with $u_k \to \bar u$ as $k\to \infty$. For $\varepsilon>0$, {\em Algorithm~\ref{alg:ROM_OPT}} terminates after a finite number of steps $k^*\in \N$ with output $u_{k*}$ satisfying the {\FOM} optimality condition up to tolerance $\beta \varepsilon$, i.e.,
    \begin{equation}\label{eq:term1}
        \norm{\nabla \hat J(u_{{k^*}})}{L^2(0,T;U)}\leq \beta \varepsilon.
    \end{equation}
    and the estimate
    \begin{equation}\label{eq:term2}
        0\leq \Da( u_{k^*}) \leq \norm{\bar u- u_{k^*}}{L^2(0,T;U)}\leq \Db (u_{k^*}) \leq  \varepsilon.
    \end{equation}
\end{theorem}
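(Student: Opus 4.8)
The plan is to split the statement into its two halves and handle the $\varepsilon>0$ termination claim first, since it is the shorter one. For $\varepsilon>0$: I would argue by contradiction that the algorithm cannot run forever. If it did not terminate, it would produce an infinite snapshot sequence $S_0\subset S_1\subset\cdots$, with $S_{k+1}=S_k\cup\{y(u_k),p(y(u_k))\}$. The key monotonicity input is Lemma~\ref{lem:rank_lemma}: as long as the algorithm does not stop, the new snapshots must genuinely enlarge the POD space (otherwise, I claim, the iterate would already be exact). More precisely, if at step $k$ we have $\calW_{S_{k+1}}=\calW_{S_k}$, then $y(u_k),p(y(u_k))\in\calW_{S_k}=V_{r_k}=W^{r_k}(0,T)$ pointwise a.e., and since $u_k=\bar u^{r_k}$, Lemma~\ref{lem:interpol} forces $u_k=\bar u$, whence $\nabla\hat J(u_k)=0\le\beta\varepsilon$ and the termination test in Algorithm~\ref{algo:line:errorest} fires — contradiction. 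Hence each non-terminating step strictly increases $\bar r_{S_k}$; but $\bar r_{S_k}\le\dim V\le\infty$, and more usefully the iterates $u_k=\bar u^{r_k}$ with $r_k\to\infty$ converge to $\bar u$ by Lemma~\ref{lem:ROMconv}, so $\nabla\hat J(u_k)\to\nabla\hat J(\bar u)=0$ by continuity of $\nabla\hat J$, contradicting $\Db(u_k)=\tfrac1\beta\|\nabla\hat J(u_k)\|_{L^2(0,T;U)}>\varepsilon$ for all $k$. So the algorithm stops at some finite $k^*$. At termination, \eqref{eq:term1} is immediate from the stopping criterion ($\Db(u_{k^*})\le\varepsilon$ means $\|\nabla\hat J(u_{k^*})\|\le\beta\varepsilon$), and \eqref{eq:term2} follows by combining the \emph{a posteriori} estimate \eqref{eq:error_est} from Corollary~\ref{cor:error_est_control} (applied with $\tilde u=u_{k^*}$) with $\Db(u_{k^*})\le\varepsilon$, using $0\le\Da\le e_u\le\Db$.

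For the case $\varepsilon=0$: here the algorithm may run forever, and I must show $u_k\to\bar u$. Either the algorithm terminates at some finite $k^*$ (then $\Db(u_{k^*})=0$ forces $\nabla\hat J(u_{k^*})=0$, so $u_{k^*}=\bar u$ by the first-order condition \eqref{eq:FOM_optcond}, and the sequence is eventually constant at $\bar u$); or it runs forever, in which case, by the same dichotomy as above, every step strictly enlarges the POD rank, so $r_k=\bar r_{S_k}\to\infty$. I then want to invoke Lemma~\ref{lem:ROMconv} to conclude $u_k=\bar u^{r_k}\to\bar u$. The mild technical point is that Lemma~\ref{lem:ROMconv} is stated for a \emph{fixed} orthonormal basis $(v_r)_r$ of $V$ with $V_r=\linspan(v_1,\dots,v_r)$, whereas here the basis vectors of $V_{r_k}$ change from iteration to iteration. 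What is actually needed is a nested, exhausting sequence of subspaces: $V_{r_k}\subset V_{r_{k+1}}$ holds because $S_k\subset S_{k+1}$ implies $\calW_{S_k}\subset\calW_{S_{k+1}}$ by Lemma~\ref{lem:helpPOD}-b), and $\overline{\bigcup_k V_{r_k}}=V$ must be shown. I would establish the latter by noting that $\bigcup_k V_{r_k}\supset\overline{\linspan}\{y(u_k),p(y(u_k)):k\in\N\}$ and that, were the closure $V_\infty:=\overline{\bigcup_k V_{r_k}}$ a proper subspace, the limiting reduced OCP on $V_\infty$ would have a minimizer $u_\infty$ with $y(u_\infty),p(y(u_\infty))\in V_\infty$ (the snapshots $y(u_k),p(y(u_k))$ lie in $V_\infty$ and $u_k\to u_\infty$), so by Lemma~\ref{lem:interpol} $u_\infty=\bar u$ and the ranks would have stabilized — contradicting that they strictly increase forever. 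Once $(V_{r_k})_k$ is nested and exhausts $V$, the convergence proof of Lemma~\ref{lem:ROMconv} (which only uses these two properties, not the specific basis) gives $\bar u^{r_k}\to\bar u$.

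The main obstacle, and the place I would spend the most care, is exactly this last density argument: justifying that the adaptively generated spaces $V_{r_k}$ exhaust $V$ (or enough of it) so that Lemma~\ref{lem:ROMconv} applies. The dichotomy "either the snapshots add a new direction or the current iterate is already optimal" (via Lemmas~\ref{lem:rank_lemma} and \ref{lem:interpol}) is the engine that drives both the termination proof and the density proof, and getting the limiting/compactness argument clean — identifying the limit of the $u_k$, showing the limiting snapshots live in $V_\infty$, and closing the loop with the interpolation property — is the delicate part. Everything else (the stopping-criterion manipulations, the final chain of inequalities \eqref{eq:term2}) is routine bookkeeping with the already-established Corollary~\ref{cor:error_est_control}.
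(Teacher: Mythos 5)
Your proof follows essentially the same route as the paper's: the dichotomy ``either the new snapshots strictly enlarge the POD space (Lemma~\ref{lem:rank_lemma}) or the current iterate already satisfies the interpolation property and hence equals $\bar u$ (Lemma~\ref{lem:interpol})'', the resulting strict growth $r_k\to\infty$, convergence of $\bar u^{r_k}$ via Lemma~\ref{lem:ROMconv}, and the routine stopping-criterion manipulations for \eqref{eq:term1}--\eqref{eq:term2}. The one place you go beyond the paper is the density of $\bigcup_k V_{r_k}$ in $V$: the paper simply asserts that iterating the procedure ``constructs an orthonormal basis of $V$'' before invoking Lemma~\ref{lem:ROMconv}, whereas you correctly note that a nested sequence of finite-dimensional subspaces with $r_k\to\infty$ need not exhaust $V$, and you supply a limiting argument. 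That patch is sound in substance --- the snapshots $y(u_k),p(y(u_k))$ lie in the closed space $V_\infty$, so their limits $y(u_\infty),p(y(u_\infty))$ do as well, and the interpolation property applied on $V_\infty$ yields $u_\infty=\bar u$ --- but be aware that this already gives the desired conclusion directly: the final ``contradiction'' you draw (that $u_\infty=\bar u$ would force the ranks to stabilize) does not follow from $u_\infty=\bar u$ (the $u_k$ only converge to, and need not equal, $u_\infty$) and is not needed, since $u_k\to\bar u$ is what you are after whether or not $V_\infty=V$.
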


\begin{proof}
    First, let $\varepsilon=0$ and suppose at step $k\in \N$, it holds $\Db(u_k)=\nicefrac{1}{\beta}\,|\nabla \hat J(u_k)|_{L^2(0,T;U)}>0$ in line \ref{algo:line:errorest} of Algorithm~\ref{alg:ROM_OPT}. Necessarily we have $\bar r_{S_k}<\infty$. Then, by the lower bound in \eqref{eq:error_est} it holds $\bar u \neq u_k = \bar u^{r_k}$. Therefore, we have by Lemma~\ref{lem:interpol}, that $y(u_k)\notin W^{r_k}(0,T)$ or $p(y(u_k))\notin W^{r_k}(0,T)$. Hence, the snapshot set $S_{k+1}=S_k \cup \{y(u_k), p(y(u_k))\}$ defined in line \ref{algo:line:newsansphots} of Algorithm~\ref{alg:ROM_OPT} contains new information. If $\bar r_{S_{k+1}}=\infty$, we directly obtain $\bar r_{S_k}<\bar r_{S_{k+1}}$ and also $u_{k+1}=\bar u$, i.e., convergence after a finite number of steps for $\varepsilon=0$. If $\bar r_{S_{k+1}}< \infty$, there exists $\tilde s\in \{y(u_k), p(y(u_k))\}$ with \eqref{eq:stilde}, since else it would hold $y(u_k)\in L^2(0,T;V_{r_k})$ and $p(y(u_k))\in  L^2(0,T;V_{r_k})$, due to ${V}_{r_k}=\calW_{S_k}$ for ${r_k}=\bar r_{S_k}$. Then by uniqueness of the {\ROM} state equation \eqref{eq:ROM_PDE} in $L^2(0,T;V_{r_k})$ one would obtain again $y(u_k)\in W^{r_k}(0,T)$ and $p(y(u_{r_k}))\in  W^{r_k}(0,T)$, which we excluded above. Applying Lemma~\ref{lem:rank_lemma} for $S= S_k$ and $S_+=S_{k+1}$ implies $\bar r_{S_k}<\bar r_{S_{k+1}}$. By iterating this procedure, we construct an orthonormal basis of $V$ and Lemma~\ref{lem:ROMconv} ensures convergence, since $r_k\to \infty$ as $k\to \infty$. Now $u_k\coloneqq \bar u^{r_k}\to \bar u$ and \eqref{eq:error_est} imply $\nabla \hat J(u_k)\to 0$ as $k\to \infty$. Hence, for $\varepsilon>0$, the termination criterion in line \ref{algo:line:errorest} of Algorithm~\ref{alg:ROM_OPT} gets triggered after a finite number of steps and ensures \eqref{eq:term1} and \eqref{eq:term2} using \eqref{eq:error_est}.
\end{proof}

\begin{remark}\label{rem:choice_r}
    Algorithm~\ref{alg:ROM_OPT} can be modified or improved in various ways. For example, instead of choosing the maximal {\POD} rank in line \ref{algo:line:newsansphots} of Algorithm~\ref{alg:ROM_OPT}, one can make a choice based on an energy criterion (cf. \cite[Theorem 1.8]{GubV17}), i.e., choose $r_k\in \N$ such that
    {
    \begin{align}\label{eq:energy}
       \frac{\sum_{i=1}^{r_k}\lambda_i}{\sum_{i=1}^{\bar r_S}\lambda_i}=\frac{\sum_{i=1}^{r_k}\lambda_i}{\sum_{s\in S}|s|_{L^2(0,T;V)}^2}\leq 1-\varepsilon_{\mathsf{POD}},
    \end{align}
    for some tolerance $\varepsilon_{\mathsf{POD}}\in [0,1)$,} and include another inner loop to adjust $r_k\leq \bar r_{S_k}$ around line \ref{algo:line9} of Algorithm~\ref{alg:ROM_OPT}. Since the estimators in \eqref{eq:error_est} are expensive to evaluate, it might not be of advantage to use them in the inner loop for adjusting $r$. Instead, one could combine the certification process for the inner loop with the cheap-to-evaluate (i.e., independent of {\FOM} calculations) error estimators presented, e.g., in \cite{kartmann2024certifiedmodelpredictivecontrol,karcher2018certified,ali2020reduced} (see also Remark \ref{rem:apost}). {Note that the snapshot set in Algorithm~\ref{alg:ROM_OPT} grows monotonically, which may
lead to high storage requirements and increased {\POD} costs for large-scale problems.
As a remedy, one may employ hierarchical approximate {\POD} techniques ({\HAPOD},
cf.~\cite{himpe2018hierarchical}), where the {\POD} basis is updated incrementally as new
data becomes available, avoiding repeated recomputation on an ever-growing snapshot set.
Moreover, it may be advantageous to develop criteria for removing outdated snapshots from
$S_{k+1}$.} In view of {\MOR}, i.e., to obtain a fast approximation of $\bar u$, Algorithm~\ref{alg:ROM_OPT} should always be run with a tolerance $\varepsilon>0$. For parabolic differential operators with analytic coefficients, one can expect an exponential decay of the eigenvalues $\lambda_i$ in Theorem~\ref{theo:POD} and thus Algorithm~\ref{alg:ROM_OPT} to converge fast if $\varepsilon>0$. Last, in a numerical implementation, it is advised to use the previous iterate as a warm start for the next inner optimization in Algorithm~\ref{alg:ROM_OPT}.
\end{remark}
\begin{remark}[Extension to nonlinear problems]\label{rem:extension_to_nonlinear}
{
We believe that a transfer of the results obtained in this work to nonlinear {\PDEs} is feasible. 
A key requirement for our approach is that the control enters the {\PDE} linearly and the cost 
functional is quadratic in the control. Under these assumptions, the same arguments as above 
show that the first-order optimality conditions imply that every stationary point lies in the 
reduced control space. Moreover, assuming a second-order sufficient optimality condition, 
one can derive analogous (local) stability estimates, which are essential for the a~posteriori 
error analysis.
}
\end{remark}
\section{Numerical Results}\label{sec:numexps}

\subsection{Model problem}\label{subsec:problem}

Let $\Omega=(0,1)^2$, $U=H=L^2(\Omega)$, and $V=H^1(\Omega)$. In the numerical experiments, we consider the following {\OCP}
\begin{subequations}\label{subEq6}
    \begin{align}
        \label{eq:OCP_concrete}
        \min J(y,u)\coloneqq \int^{T}_{0}\tfrac{1}{2}\norm{y(t)-y_d(t)}{H}^2+\tfrac{\beta}{2} \norm{u(t)}{U}^2\dt
    \end{align}
    subject to $u\in L^2(0,T;U)$ and $y\in W(0,T)$ solving
    \begin{align}\label{eq:FOM_PDE_concrete}
        \begin{cases}
            \ddt y(t)+\Delta y(t)+(2+\sin(4\pi t))y(t) = u(t)\in V'  &\text{in } (0,T),\\
            y(0)=y_0 & \text{in }H
        \end{cases}
    \end{align}
\end{subequations}
for $T=1$ and different parameters of $\beta>0$. The target state and initial value are given for $t\in [0,T], \bx \in \Omega$ by
\begin{align*}
     y_d(t,\bx) = \tfrac{1}{6}\,\sin(2\pi \bx_1t)\sin(2\pi \bx_2t)\exp(2\bx_1), \quad y_0(\bx) = 0.
\end{align*}
It can be shown that this problem satisfies assumption \textbf{(A1)}, \textbf{(A2)}. Note that, since $U=H$ and $B$ being the embedding from $U$ into $V'$, the optimality condition takes the form
\begin{align*}
    \bar u(t) = -\tfrac{1}{\beta}\,\bar p(t)\quad \text{for almost all } t\in (0,T).
\end{align*}
Thus, $\bar u(t) \in V$ a.e., and we can use the same reduced space for the state space and the control space, $U_r = V_r$. Further, a lower bound for the constant in \eqref{eq:error_est} can be derived from $\norm{C\calS}{}\leq \eta_V=1$. This follows since $C$ is the embedding from $W(0,T)$ into $L^2(0,T;H)$ for the choice of $V$, and $H$ above, and the tracking term in \eqref{eq:OCP_concrete}. Then, a standard a priori estimate leads to $\norm{\calS}{}=\eta_V=1$ for the {\PDE} in \eqref{eq:FOM_PDE_concrete}. Thus, to evaluate the error estimator $\Da(\tilde u)$ from \eqref{eq:error_est}, we use the constant $c_\mathsf a>0$ with
\begin{equation}\label{eq:lower constant}
   c_\mathsf a\coloneqq  \tfrac{1}{\beta+1}\leq \tfrac{1}{\beta+\norm{C\calS}{}^2}.
\end{equation}


\subsection{Discretization and algorithmic setup} \label{sec:algsetup}

We compare three algorithms:
\begin{enumerate}
    \item {\FOM}: {\FOM} Barzilai-Borwein (\BB) gradient descent (cf. \cite{azmi2022convergence});
    \item Control-{\ROM}: adaptive {\POD-\ROM} gradient descent with state and control reduction, i.e., Algorithm~\ref{alg:ROM_OPT}, where \eqref{eq:fullROM_OCP} is solved by {\BB} gradient descent;
    \item {\ROM}: adaptive {\POD-\ROM} gradient descent with state-space reduction but without control reduction, i.e., Algorithm~\ref{alg:ROM_OPT} with \eqref{eq:halfrom_opcond} in place of \eqref{eq:fullROM_OCP} in line \ref{algo:line9} of Algorithm~\ref{alg:ROM_OPT}, solved by {\BB} gradient descent.
 \end{enumerate}
For the {\FOM} discretization of the problem from Section~\ref{subsec:problem}, we use piecewise linear finite elements in space with $N=10201$ degrees of freedom and $K=100$ time steps in an implicit Euler scheme for time discretization. {The {\FOM} discretization parameters are chosen to represent a moderate-scale problem and
to demonstrate a meaningful speed-up of the {\ROM} methods. The {\POD} basis size is
selected using an energy criterion (see \eqref{eq:energy} in Remark~\ref{rem:choice_r})}, with a tight tolerance
$\varepsilon_{\mathsf{POD}} = 1 - 10^{-12}$, which effectively corresponds to using the
maximal POD rank. For the optimization, we use the stopping criterion $|\nabla \hat J(u_k)|_{L^2(0,T;U)}\leq \tau$ as a stopping criterion for both the {\FOM} and {\ROM} methods for several parameters $\tau$ chosen below. This corresponds to the stopping criterion used in Algorithm~\ref{alg:ROM_OPT} for $\varepsilon=\nicefrac{\tau}{\beta}$ (see also \eqref{eq:term1}). The initial guess $u_0$ for the optimization is chosen to be $u_0\equiv 0$. To verify the error estimator statements and compare the sharpness w.r.t. the true error, we compute the true solution $\bar u$ of the {\FOM-\OCP} using the {\BB} method with a tolerance of $\tau = 10^{-12}$ for the gradient norm. We provide the source code of the numerical experiments in \cite{code}, which were performed on a MacBook Pro 2020 with a 2,3 GHz Quad‑Core Intel Core i7.
%
\subsection{Comparison of the state-reduced {\OCP} and the control- and state-reduced {\OCP} }
%
In the following, we want to numerically verify the equivalence in Lemma~\ref{lem:equival}, that is we numerically solve the state-reduced {\OCP} \eqref{eq:ROM_OCP} and the control- and state-reduced {\OCP} \eqref{eq:fullROM_OCP} for a fixed reduced space $V_r$ and compare the results in terms of approximation quality and computational performance for a gradient tolerance of $\tau =10^{-12}$. To this end, we construct the space $V_r$ via {\POD} for the snapshot set $S=\{y(u_0),p(u_0)\}$, leading to a basis of size $r=17$. In Table~\ref{tab:rom_rom_comparison}, we compare the errors in state, adjoint, and control between the two models for different values of $\beta\in \{10^{-1},10^{-2}, 10^{-3}, 10^{-4} \}$ and the speed-up of \eqref{eq:fullROM_OCP} compared to \eqref{eq:ROM_OCP}. The errors are defined as
\begin{equation}\nonumber
    \text{err}_y\coloneqq \norm{\hat y^r - \bar y^r}{L^2(0,T;H)},\quad \text{err}_p\coloneqq \norm{\hat p^r - \bar p^r}{L^2(0,T;H)},\quad \text{err}_u\coloneqq \norm{\hat u^r - \hat u^r}{L^2(0,T;U)}.
\end{equation}
We observe in Table \ref{tab:rom_rom_comparison} that the control errors err$_u$ between the two reduced models are in the range of order $-16$ to $-9$ for decreasing choices of $\beta$, while the errors in the state and the adjoint are in the range of order $-17$ to $-11$ correspondingly. This confirms the equivalence result in Lemma~\ref{lem:equival}. Further, we observe a speed-up of a factor of $9$ for the control- and state-reduced {\OCP} \eqref{eq:fullROM_OCP} compared to the state-reduced {\OCP} \eqref{eq:ROM_OCP}, which is independent of $\beta$. 

Thus, control reduction is of advantage in terms of computational performance, while the approximation quality is affected only marginally.
\begin{table}[ht!] 
     \scriptsize
	\centering 
    \caption{Errors and speed-ups between state-reduced {\OCP} \eqref{eq:ROM_OCP} and control- and state-reduced {\OCP} \eqref{eq:fullROM_OCP} for different values of $\beta$}
	\label{tab:rom_rom_comparison}
	\begin{tabular}{lcccc}\toprule
		$\beta$ & $10^{-1}$ & $10^{-2}$ & $10^{-3}$& $10^{-4}$  \\ 
        \midrule
		$\text{err}_u$ & $3.73$e--$16$ & $6.96$e--$13$ & $5.85$e--$10$ & $5.29$e--$9$ \\
        $\text{err}_y$ & $9.43$e--$17$ & $2.37$e--$13$ & $1.17$e--$11$ & $1.87$e--$11$ \\
        $\text{err}_p$ & $4.21$e--$17$ & $2.54$e--$15$ & $6.29$e--$13$ & $7.26$e--$13$ \\
        speed-up & $9.5$ & $9.4$ & $9.2$ & $9.5$ \\
		\bottomrule
	\end{tabular}
\end{table}
%
\subsection{Comparison of the adaptive {\POD} optimization and the \FOM}
%
Next, we approximately solve the {\FOM-\OCP} using the adaptive {\ROM} method in Algorithm~\ref{alg:ROM_OPT}, and numerically verify the error estimator statement \eqref{eq:error_est} from Corollary~\ref{cor:error_est_control}, and the convergence of Algorithm~\ref{alg:ROM_OPT} in Theorem~\ref{theo:convergence_algo}, and compare the methods in terms of computational performance for a gradient tolerance of $\tau =10^{-8}$.
\begin{figure}
    \centering
    \begin{subfigure}[b]{0.32\textwidth}
         \centering
          \includegraphics[width=1\linewidth]{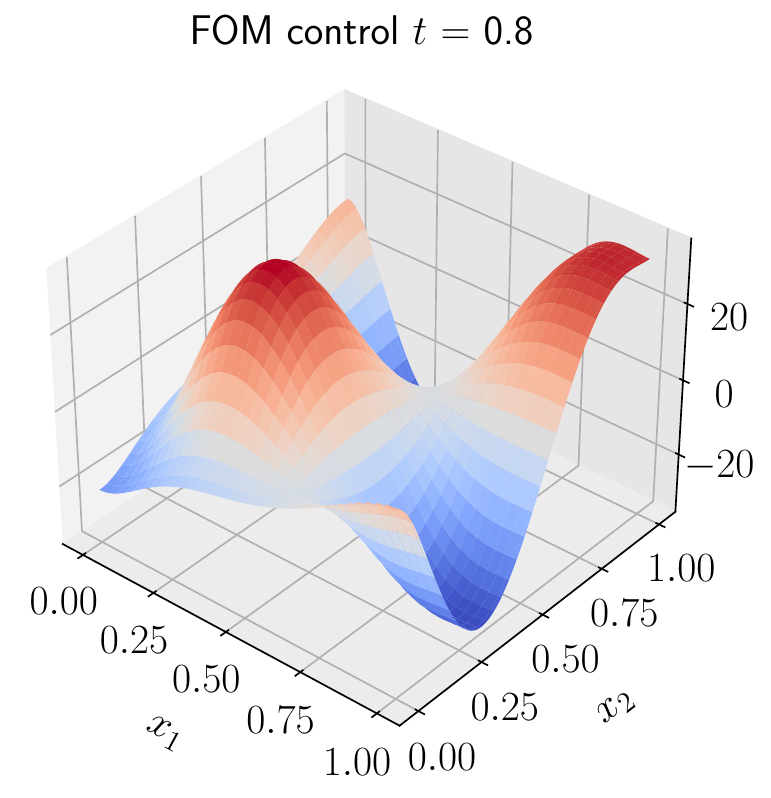}
         \caption{}
    \end{subfigure}
     \begin{subfigure}[b]{0.32\textwidth}
         \centering
          \includegraphics[width=1\linewidth]{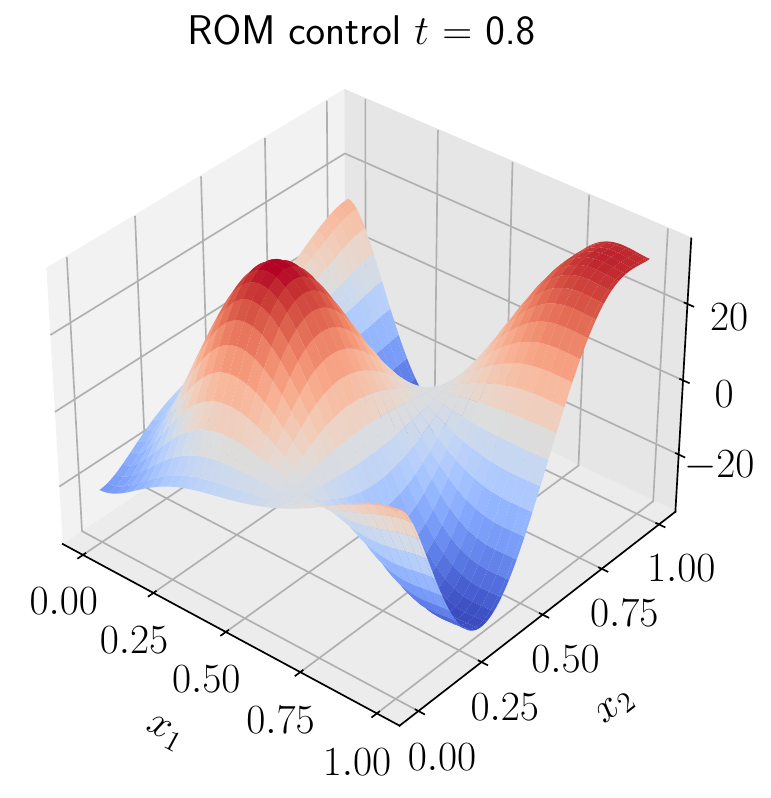}
         \caption{}
    \end{subfigure}
     \begin{subfigure}[b]{0.32\textwidth}
         \centering
          \includegraphics[width=1\linewidth]
          {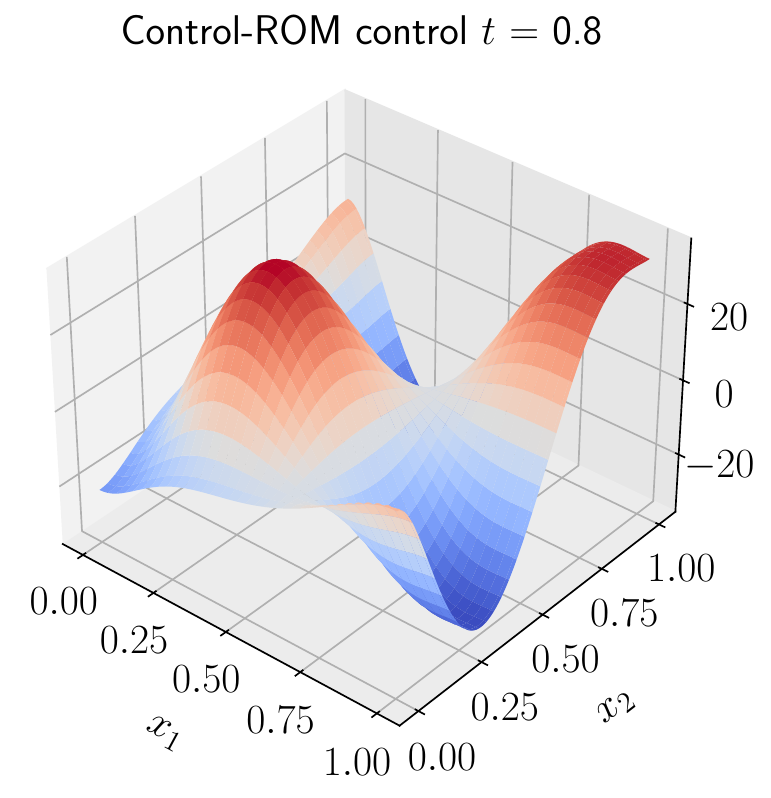}
         \caption{}
    \end{subfigure}

    \caption{Snapshots of the optimal controls generated by each algorithm for $\beta =10^{-4}$}
    \label{fig:controls}
\end{figure}
\begin{table}[ht!] 
    \scriptsize
	\centering 
    \caption{Performance comparison of \FOM, \ROM, and Control-\ROM}
	\label{tab:perfromacne}
	\begin{tabular}{lcccccc}\toprule
		$\beta =10^{-1}$  & time [s] & speed-up & $k$& $\Da(\bar u^r )$ &$e_u(\bar u^r )$ &$\Db(\bar u^r )$  \\ 
        \midrule
		\FOM &  117  &  - & 10 & -& $3.33$e--$8$&- \\
        \ROM   & 33 & 3.6 & 2 &$4.54$e--$9$& $2.49$e--$8$ & $5.00$e--$8$ \\
        Control-\ROM   & 30 & 3.9 & 2 &$4.55$e--$9$& $2.49$e--$8$ & $5.00$e--$8$\\
        \midrule
        $\beta =10^{-2}$  & time [s] & speed-up & $k$& $\Da(\bar u^r )$ &$e_u(\bar u^r )$ &$\Db(\bar u^r )$  \\ 
        \midrule
		\FOM   & 306  &  - & 26 & -& $1.90$e--$7$&- \\
        \ROM   & 35 &8.6 & 2 & $6.47$e--$9$&$1.97$e--$7$& $6.53$e--$7$\\
        Control-\ROM   & 34 & 9 & 2 & $6.47$e--$9$&$1.97$e--$7$& $6.53$e--$7$\\
        \midrule
        $\beta =10^{-3}$  & time [s] & speed-up & $k$& $\Da(\bar u^r )$ &$e_u(\bar u^r )$ &$\Db(\bar u^r )$  \\ 
        \midrule
		\FOM   & 699  &  - & 76 & -& $2.76$e--$6$ &- \\
        \ROM   & 68 & 10.3 & 3 &$9.18$e--$9$&$9.09$e--$6$&$9.19$e--$6$\\
        Control-\ROM  & 56 & 12.5 & 3 &$7.93$e--$9$&$7.09$e--$6$&$7.94$e--$6$\\
        \midrule
        $\beta =10^{-4}$  & time [s] & speed-up & $k$& $\Da(\bar u^r )$ &$e_u(\bar u^r )$ &$\Db(\bar u^r )$  \\ 
        \midrule
		\FOM &  2048  &  - & 208 & -& $1.48$e--$5$ &- \\
        \ROM   & 99 & 20.6 & 4 & $6.58$e--$9$& $5.93$e--$5$ &$6.58$e--$5$ \\
        Control-\ROM   & 59 & 34.2 & 4 &$9.66$e--$9$&$8.82$e--$5$&$9.66$e--$5$\\
		\bottomrule
	\end{tabular}
\end{table}
In Figures~\ref{fig:controls}, we plot the optimal controls for $t=0.8$ and $\beta=10^{-4}$ for all three methods. The {\FOM}, {\ROM}, and Control-{\ROM} solutions are visually indistinguishable from the {\FOM} solution. {This is also reflected in the true control errors $e_u(\bar u^r )$ reported in Table~\ref{tab:perfromacne} ranging from $10^{-8}$ to $10^{-5}$ for $\beta$ from $10^{-1}$ to $10^{-4}$}. In Table~\ref{tab:perfromacne}, we also compare the methods in terms of performance for $\beta \in \{10^{-4}, 10^{-3}, 10^{-2}, 10^{-1} \}$. We observe that the {\ROM} and Control-{\ROM} methods achieve speed-ups up to $20.6$ and $34.2$, respectively, compared to the {\FOM} method. The speed-up increases with decreasing $\beta$, which can be explained as follows. Since we use a gradient method, the cost of one {\FOM} iteration consists mainly of one gradient evaluation, which needs one {\FOM} state and one {\FOM} adjoint solve. On the other hand, the cost of one {\ROM} or Control-{\ROM} iteration is mainly determined by one {\FOM} state and one {\FOM} adjoint solve to evaluate the error estimator in line \ref{algo:line:errorest} of {Algorithm \ref{alg:ROM_OPT}} in addition to the construction of the reduced spaces and the solution of the reduced {\OCP}. Thus, the cost of one {\FOM} iteration is higher than the cost of one (Control-){\ROM} iteration, and speedups can only be expected if the (Control-){\ROM} achieves a reduction in outer iterations $k$. This is the case for decreasing $\beta$, since the problem becomes more ill-conditioned. We observe that the Control-{\ROM} method achieves a higher speed-up than the {\ROM} method, since the reduced problem for the inner problem for the Control-{\ROM} is completely independent of the {\FOM} dimension. These results indicate that control reduction pays off numerically. Also, the true error and the upper error estimators increase with decreasing $\beta$, as it is expected by the factor $\nicefrac{1}{\beta}$ in \eqref{eq:error_est}. Note that \eqref{eq:term2} in Theorem~\ref{theo:convergence_algo} is valid for all cases, since we have $e_u(\bar u^r )\leq\varepsilon=\nicefrac{10^{-8}}{\beta}\in\{10^{-4}, 10^{-5}, 10^{-6}, 10^{-7} \}$ for $\beta\in \{10^{-4}, 10^{-3}, 10^{-2}, 10^{-1} \}$.

Furthermore, we observe in Table~\ref{tab:perfromacne} that the error estimator statement \eqref{eq:error_est} from Corollary~\ref{cor:error_est_control} is valid for $\tilde u = \bar u^r$. 
This has also been verified for $\tilde u = u_k$ across all {\ROM} methods, as illustrated in Figure~\ref{fig:est}.
\begin{figure}
    \centering
    \begin{subfigure}[b]{0.41\textwidth}
         \centering
          \includegraphics[width=1\linewidth]{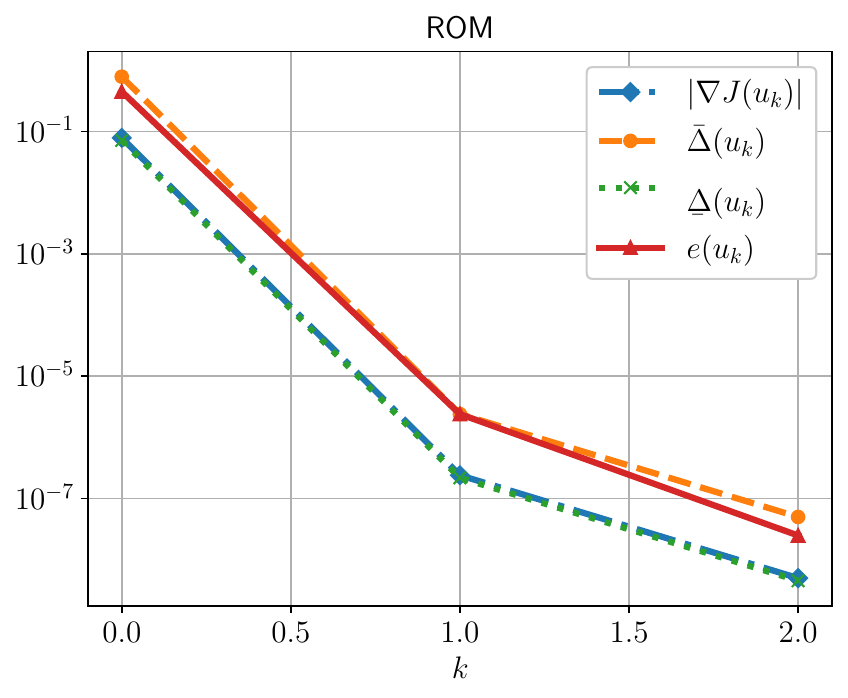}
         \caption{$\beta=10^{-1}$}
     \end{subfigure}
     \begin{subfigure}[b]{0.41\textwidth}
         \centering
          \includegraphics[width=1\linewidth]{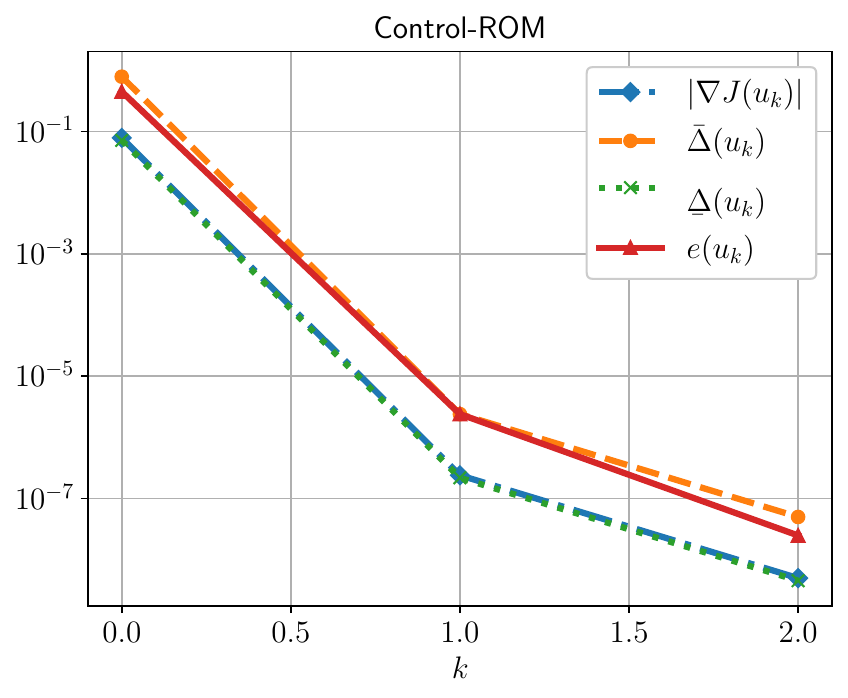}
         \caption{$\beta=10^{-1}$}
     \end{subfigure}
     \\
     \begin{subfigure}[b]{0.43\textwidth}
         \centering
          \includegraphics[width=1\linewidth]{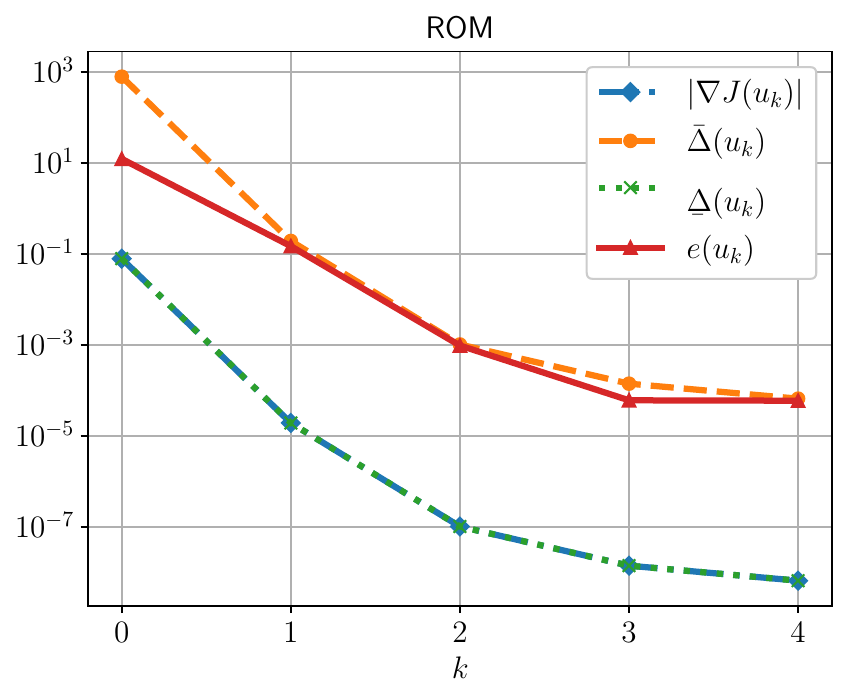}
    \caption{$\beta=10^{-4}$}
     \end{subfigure}
     \begin{subfigure}[b]{0.41\textwidth}
         \centering
          \includegraphics[width=1\linewidth]{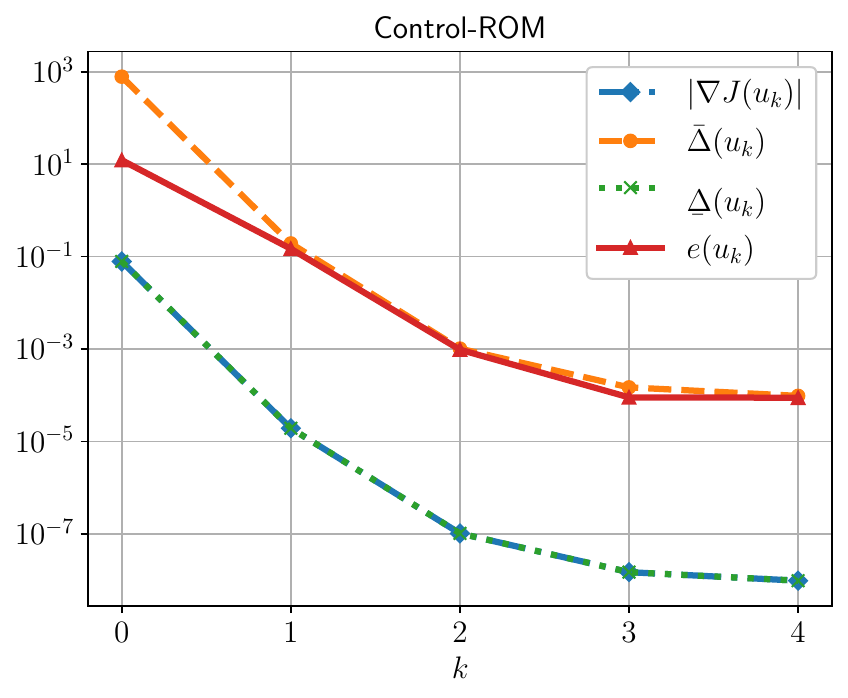}
      \caption{$\beta=10^{-4}$}
     \end{subfigure}
    \caption{The error quantities from \eqref{eq:error_est}, and $|\nabla \hat J(u_k)|_{L^2(0,T;U)}$ plotted against iteration counter $k$ for different $\beta$, and both {\ROM} (left column) and {Control-\ROM} methods (right column)}
    \label{fig:est}
\end{figure}
There the gradient norm $|\nabla \hat J(u_k)|_{L^2(0,T;U)}$, the true errors $e_u(u_k)$ and the error estimators $\Da(u_k)$ and $\Db(u_k)$ are plotted over the outer iterations $k$ of Algorithm~\ref{alg:ROM_OPT} for $\beta\in \{10^{-4}, 10^{-3}, 10^{-2}, 10^{-1} \}$. We observe that the upper estimator is sharp, whereas the lower estimator is less accurate due to the possibly coarse approximation $c_\mathsf a$ of the lower bound constant given in \eqref{eq:lower constant}. Moreover, the lower bound matches the gradient norm, i.e., $\Da(u_k)\approx |\nabla \hat J(u_k)|_{L^2(0,T;U)}$, since it holds  $\bar c\approx 1$ for $\beta\in \{10^{-4}, 10^{-3}, 10^{-2}, 10^{-1} \}$. In total, Corollary~\ref{cor:error_est_control} is numerically verified.

In {Table~\ref{tab:basisize}}, we consider the basis size $r_k$ over the outer iterations $k$ of Algorithm–\ref{alg:ROM_OPT} for $\beta\in \{10^{-4}, 10^{-3}, 10^{-2}, 10^{-1} \}$. In all cases, we observe a monotonically increasing effect of the basis size. {This is due to the accumulation of snapshot information by the snapshot selection strategy in Algorithm~\ref{alg:ROM_OPT}}. Since the initial guess $u_0$ and initial value $y_0$ are chosen to be zero, the basis size for the first iteration only contains information about the adjoint state, resulting in a smaller basis size. Then, the algorithm moves quickly towards the optimal solution, and the basis size increases and stabilizes around $r=20$ to $r=40$ for all cases.
%
%
\begin{table}[ht!] 
    \scriptsize
	\centering 
    \caption{{Evolution of the basis size $r_k$ during iterations $k$}}
	\label{tab:basisize}
	\begin{tabular}{lcccc}\toprule
        $k$ &1&2&3&4\\
        \midrule
		$\beta =10^{-1}$  &  &  & &    \\ 
        \midrule
        \ROM   & 17 & 23 & - &-\\
        Control-\ROM   & 17 & 23 & - &-\\
        \midrule
        $\beta =10^{-2}$  &  &  & &  \\ 
        \midrule
         \ROM   & 17 & 24 & - &-\\
        Control-\ROM   & 17 & 24 & - &-\\
        \midrule
        $\beta =10^{-3}$  &  &  & &   \\ 
        \midrule
        \ROM   & 17 & 27 & 31 &-\\
        Control-\ROM   & 17 & 27 & 32 &-\\
        \midrule
        $\beta =10^{-4}$  &  &  & &    \\ 
        \midrule
        \ROM   & 17 & 30 & 36 & 43\\
        Control-\ROM   & 17 & 30 & 37 &43\\
		\bottomrule
	\end{tabular}
\end{table}
%
%
\section{Conclusion}\label{sec:conclusion}
We demonstrated that, for unconstrained linear-quadratic optimal control problems, a reduction in the state variable induces a corresponding reduced structure in the optimal control. Consequently, the state-reduced \OCP\ can be equivalently reformulated as an \OCP\ reduced in both 
the state and control spaces. Furthermore, we established convergence of the reduced-order model {\ROM} towards the {\FOM} and derived \emph{a posteriori} upper and lower error bounds for the error in the optimal control. Based on these results, we proposed an adaptive optimization algorithm for the approximate 
solution of the \FOM-\OCP\ and showed its convergence both theoretically and numerically. It turned out that our {Control-\ROM} achieves a higher speed-up (compared to the {\FOM-\OCP}) than the gradient descent with only state-space reduction. {In future work, it would be interesting to extend the strategy to nonlinear \PDEs.}

\subsection*{Declarations}
\begin{itemize}
    \item \textbf{Funding}: The authors acknowledge funding by the Federal Ministry of Education and Research (grant no. 05M22VSA).
    \item \textbf{Code availability statement}: The code to reproduce the numerical experiments is available at \cite{code}.
    \item \textbf{Competing Interests}: The authors have no relevant financial or non-financial interests to disclose.
    \item \textbf{Data availability}: Data sharing is not applicable to this article as no datasets were generated or analysed during the current study.
    \item \textbf{Authors' contributions}: M.K. prepared the initial manuscript and carried out the software implementation. S.V. was responsible for supervision and funding acquisition. All authors reviewed and approved the final manuscript.
\end{itemize}
\bibliography{biblio.bib}

\appendix
\section{Proof of Lemma \ref{lem:lemApost}}\label{ap:prooflemma4}
\begin{proof}
    We have the error-residual relation
    \begin{align*}
        \calQ(u-\tilde u)=d-\calQ\tilde u.
    \end{align*}
    Testing this equation with the error $u-\tilde u\in \calU$ and using the coercivity of $\calQ$ results in 
    \begin{align*}
        \beta \norm{u-\tilde u}{\calU}^2 \leq \langle \calQ(u-\tilde u),u-\tilde u\rangle_{\calU',\calU} = \langle d-\calQ\tilde u,u-\tilde u\rangle_{\calU',\calU} \leq \norm{d-\calQ\tilde u}{\calU'}\norm{u-\tilde u}{\calU},
    \end{align*}
    which implies the upper bound in \eqref{eq:relation_coercive_est_standard}. The lower bound follows from testing the error equation with the Riesz representative of the residual $\calJ_\calU^{-1}(d-\calQ\tilde u)\in\calU$ and using the continuity of $\calQ$:
    \begin{align*}
        \norm{d-\calQ\tilde u}{\calU'}^2 = \langle \calQ(u-\tilde u),\calJ_\calU^{-1}(d-\calQ\tilde u)\rangle_{\calU',\calU} \leq \norm{\calQ}{}\norm{u-\tilde u}{\calU}\norm{d-\calQ\tilde u}{\calU'},
    \end{align*}
    which implies the lower bound in \eqref{eq:relation_coercive_est_standard}.
\end{proof}
%
\end{document}